\let\oldtocsection=\tocsection
\let\oldtocsubsection=\tocsubsection
\let\oldtocsubsubsection=\tocsubsubsection
\renewcommand{\tocsection}[2]{\hspace{0em}{\vspace{0.5em}}\oldtocsection{#1}{#2}}
\renewcommand{\tocsubsection}[2]{\hspace{1em}{\vspace{0.5em}}\oldtocsubsection{#1}{#2}}
\renewcommand{\tocsubsubsection}[2]{\hspace{2em}\oldtocsubsubsection{#1}{#2}}
\numberwithin{equation}{section} 
\newtheorem{thm}{Theorem}[section]
\newtheorem{theorem}[thm]{Theorem}
\newtheorem{prop}[thm]{Proposition}
\theoremstyle{definition}
\newtheorem{definition}[thm]{Definition}
\newtheorem{remark}[thm]{Remark}
\theoremstyle{definition}
\theoremstyle{remark}
\theoremstyle{remark}
\def\imod#1{\allowbreak\mkern10mu({\operator@font mod}\,\,#1)}
\newcommand{\rhoc}{\rhob}
\newcommand{\omd}[1][d]{\omega^{\,#1}}
\newcommand{\hwvec}{v_{\Lambda_0}}
\newcommand{\wakim}[1][\lambda]{\mathbb{W}_{k}(#1)}
\newcommand{\fock}[1][1]{\pi_{#1}}
\newcommand{\hds}{H^0_{\scriptscriptstyle DS}}
\newcommand{\vacm}{L(\Lambda_0)}
\newcommand{\wverma}[1][\bar{\lambda}]{\mathbf{M}({\gamma_{#1}})}
\newcommand{\lverma}[1][\bar{\lambda}]{\mathbf{L}({\gamma_{#1}})}
\newcommand{\complex}{\mathbb{C}}
\newcommand{{\mg}}{\mathfrak{g}}
\newcommand{{\fin}}{\bar{\mathfrak{g}}}
\newcommand{{\aff}}{\mathfrak{g}}
\newcommand{\prinheis}{\mathfrak{s}}
\newcommand{\mhk}{\mathfrak{h}_k}
\newcommand{\mh}{\mathfrak{h}}
\newcommand{\mhb}{\bar{\mathfrak{h}}}
\newcommand{\wk}[1][k]{\mathscr{W}_{#1}(\bar{\mg})}
\newcommand{\walg}{\mathscr{W}}
\newcommand{\hc}{h^{\vee}}
\newcommand{\rhob}{\bar{\rho}}
\newcommand{\lb}{\bar{\lambda}}
\newcommand{\coxnum}{h^\vee} 
\newcommand{\lrb}{{\lambda} - (k+\hc) \rhob}
\newcommand{\lbrb}{\bar{\lambda} - (k+\hc) \rhob}
\newcommand{\integers}{\mathbb{Z}}
\DeclareMathOperator{\ad}{ad}
\DeclareMathOperator{\rank}{rank}
\DeclareMathOperator{\ch}{char}
\DeclareMathOperator{\tr}{tr}
\DeclareMathOperator{\gr}{gr}
\DeclareMathOperator{\Hilb}{\mathbf{H}}
\begin{document}
\title{The affine Brylinski filtration and $\mathscr{W}$-algebras}
\author{Suresh Govindarajan}
\address{Department of Physics, Indian Institute of Technology Madras, Chennai, India}
\email{suresh@physics.iitm.ac.in}

\author{Sachin S. Sharma}
\address{Department of Mathematics and Statistics, Indian Institute of Technology, Kanpur, India}
\email{sachinsh@iitk.ac.in}

\author{Sankaran Viswanath}
\address{The Institute of Mathematical Sciences, Chennai, India and Homi Bhabha National Institute, Mumbai, India}
\email{svis@imsc.res.in}

\keywords{Vertex algebras, affine Lie algebras, $\mathscr{W}$-algebras}
\maketitle

\noindent
\begin{abstract}
The Brylinski-Kostant filtration on a representation of a finite-dimensional semisimple Lie algebra has interpretations in terms of the algebra, geometry and combinatorics of 
the representation. Its extension to affine Lie algebras was first studied by Slofstra. Recent work of the present authors constructed a Poincar\'{e}-Birkhoff-Witt type basis for the dominant weight spaces of the basic representation of affine Lie algebras of type $A$, which is compatible with the affine Brylinski filtration. In this paper, we overcome the constraint of type dependence, and furnish a new, uniform proof which holds for all simply-laced affine Lie algebras.  

% The notion of Brylinski filtration was defined for the weight spaces of representations of semisimple Lie algebras. Its significance was evident as it shaded light on the deep combinatorial and geometrical interpretation of
% weight multiplicities of these representations. Its analogue (affine Brylinski filtration) was defined for the affine Lie algebras and parallel results to semisimple Lie algebras were obtained. Recently, the authors of this paper constructed a PBW type basis for sum of dominant weight spaces of basic representation of affine Lie algebras of type A, which is compatible with the affine Brylinski filtration. In this paper, authors overcome the constraint of the type dependence, and furnish a uniform proof which holds for all simply laced affine Lie algebras.  
\end{abstract}

\bigskip
\noindent

\section{Introduction}
Let $\fin$ be a simply-laced finite-dimensional simple Lie algebra (i.e., of type $A, D, E$) with Cartan subalgebra $\mhb$.
%Let $\aff$ denote the corresponding untwisted affine Lie algebra.
%
Let $L(\lambda)$ denote an irreducible highest weight representation of $\fin$. The principal nilpotent element of $\fin$ induces a filtration on the weight spaces of $L(\lambda)$, called the {\em Brylinski-Kostant} filtration \cite{RB}.

When $\mu$ is a dominant weight of $\fin$, the Poincar\'{e} series of the Brylinski-Kostant filtration  on the $\mu$-weight space of $L(\lambda)$ coincides with Lusztig's $t$-analog of weight multiplicity $m^{\lambda}_{\mu}(t)$ \cite{RB, broer}.
Braverman and Finkelberg \cite{BF10} conjectured an extension of this result for the untwisted affine Kac-Moody algebras. Slofstra \cite{Slofstra} showed that their conjecture was false, but that it holds if one replaces the principal nilpotent element by the positive part of the principal Heisenberg subalgebra of the affine Lie algebra.  The resulting filtration, termed the (affine) Brylinski filtration was studied in \cite{GSV} for the basic representation $L(\Lambda_0)$ of the affine Kac-Moody algebras of type $A$. 

One of the main results of \cite{GSV} 
was an explicit description of the subspaces of the Brylinski filtration on the dominant weight spaces of $L(\Lambda_0)$ for type $A$. 
%
% To state our main theorem, we let $\sigma$ denote a Coxeter element of $\fin$ and let $M_\sigma$ denote the principal realization of $L(\Lambda_0)$. We recall that $M_\sigma$ can be viewed as a representation of the $\walg$-algebra $\walg(\fin)$ of $\fin$.
% Postponing an explanation of all notations involved to Section~\ref{sec:notn}, we state our main theorem:
% \begin{theorem}\label{thm:maintheorem}
%     Let $Z_n :=L(\Lambda_0)_{\Lambda_0-n\delta}$. A basis of $F^d Z_n$ is given by:
%     xxxx
% \end{theorem}
% Here the operators $\omega^i_n(\sigma)$ are the actions of the modes of the generators $\omega^i$ of the $\walg$-algebra on $M_\sigma$.
%
 The proof in \cite{GSV} was limited to type $A$ since it relied on a result of \cite{FKRW} relating the representations of $\mathscr{W}(\mathfrak{gl}_{\ell +1})$ to the representations of $\mathscr{W}_{1 + \infty}$, the universal central extension of the Lie algebra of regular differential operators on $\mathbb{C}^{\times}$. 

In this article, we extend this result to all simply-laced affine Lie algebras (i.e., of types $A_\ell^{(1)}$, $D_\ell^{(1)}$ or $E_n^{(1)}$, $n=6,7,8$) and give a new, type-independent proof that provides an alternate approach even in type A. 
As in \cite{GSV}, the crux of our arguments lies in establishing that the subspace  $L(\Lambda_0)^{\mhb}$ of $\mhb$-invariants is isomorphic to an irreducible Verma module of $\mathscr{W}(\fin)$. We use results of  Arakawa \cite{arakawa2007} which relate representations of $\walg(\fin)$ to those of $\aff$ via the Drinfeld-Sokolov reduction functor; in particular this gives us a criterion for irreducibility of Verma modules of $\walg(\fin)$ (Proposition \ref{prop:wver-irr}). Another key ingredient is the {\em Miura map}, which provides a free field realization of $\walg(\fin)$ in non-critical level \cite{acl}. Pull-backs under the Miura map of oscillator representations of the Heisenberg algebra generically give irreducible Verma modules of $\walg(\fin)$ (Theorem~\ref{thm:lrbgenheis}). The  PBW type basis of the resulting Verma module gives us the desired bases for the subspaces of the Brylinski filtration and establishes our main result. 

The paper is organized as follows. Section~\ref{sec:notn} recalls relevant background from \cite{GSV} and states our main results (Theorems~\ref{thm:mainthm}-\ref{thm:wfock}). Section~\ref{sec:wstructure} contains properties of $\walg$-algebras of relevance to our work, specifically the Miura map. Section~\ref{sec:wreps} specializes work of Arakawa \cite{arakawa2007} to obtain conditions under which Verma modules of $\walg$ are irreducible. Finally, Section~\ref{sec:heismodw} studies pullbacks under the Miura maps of Heisenberg algebra representations and completes the proof of our theorems.

{\bf Acknowledgments:} The authors would like to thank Naoki Genra for informing them about the reference \cite{KW22} pertaining to images of conformal vectors under the Miura map. The authors would also like to thank Bojko Bakalov for his prompt and insightful answers to their questions.
The second named author would like to thank IMSc Chennai for its hospitality while part of this work was being done.

\section{Notation and preliminaries}\label{sec:notn}
\subsection{} Let $\aff$ be an untwisted affine Lie algebra of type $A, D$ or $E$, with Cartan subalgebra $\mh$ and  triangular decomposition $\aff = \mathfrak{n}^- \oplus \mh \oplus \mathfrak{n}^+$.  Let the standard Chevalley generators of $\aff$ be $e_i, f_i \; (0 \leq i \leq \ell)$, and let $K, d$ denote the canonical central element and derivation respectively. The principal Heisenberg subalgebra $\prinheis$ of $\aff$ is defined as:
\[
\prinheis = \{x \in \aff: [x,e] \in \complex K\},
\]
where $e=\sum_{i=0}^\ell e_i$ is the principal nilpotent element. We set $\prinheis^{\pm}:=\prinheis \cap \mathfrak{n}^{\pm}$.

\subsection{} 
Let $\fin$ be the underlying finite dimensional simple Lie algebra of $\aff$ and let $\mhb$ be its Cartan subalgebra. Let $\vacm$ denote the level 1 vacuum module ({\em the basic representation}) of $\aff$ and let $Z = \vacm^{\mhb}$ be the space of $\mhb$-invariants. It is the direct sum of the dominant weight spaces of $\vacm$:
\[ Z= \bigoplus_{n \geq 0} Z_n := \bigoplus_{n \geq 0} L(\Lambda_0)_{\Lambda_0 - n \delta}. \]
where $\delta$ is the null root of $\aff$.

%The multiset comprising the elements $j \neq 0$ each repeated $\dim \prinheis_j$ times is called the multiset of exponents of the affine Lie algebra $\aff$ \cite[Chapter 14, Table E]{kac}.

We now recall the (affine) Brylinski filtration \cite{Slofstra} on $\vacm$. 
The subspaces $F^i \vacm$ of the Brylinski filtration are defined (for $i \geq -1$) by:
\begin{equation*}
F^i \vacm = \{ v \in \vacm: x^{i+1} v=0 \text{ for all } x \in \prinheis^+\}.
\end{equation*}

\subsection{} We restrict the Brylinski filtration to $Z$ and let $F^i Z_n := Z_n \cap F^i \vacm$.
The associated graded space $\gr Z$ is then a bi-graded vector space with Hilbert-Poincar\'{e} series \cite{GSV}:
\begin{equation} \label{eq:hilbZ}
  \Hilb(\gr Z; \,t,q) := \sum_{n \geq 0} \sum_{i \geq 0} \dim \left( F^i Z_n \,/ F^{i-1}  Z_n\right) \, t^i q^n = \prod_{k=1}^{\ell} \prod_{n=1}^{\infty}{(1-t^{d_{k}}q^n)^{-1}}.
\end{equation}
where $d_{k}\,(1\leq k\leq \ell)$ are the degrees of the underlying finite-dimensional
simple Lie algebra $\fin$ arranged in ascending order \cite[Chapter 3, Section 3]{Humphreys}.
Our main result (Theorem~\ref{thm:mainthm}) constructs a natural basis of $F^i Z_n$, with $q$-degrees and $t$-degrees that are ``compatible" with Equation~\eqref{eq:hilbZ}.

\subsection{}  Consider the Heisenberg Lie algebra $\widehat{\mh} =
\bar{\mh} \otimes \complex[t, t^{-1}] \oplus \complex K$ and its Fock
space
$$\pi_1 = \mathrm{Ind}_{{\widehat{\mh}}^{+} \oplus \complex
K}^{\widehat{\mh}}{\complex} = U(\widehat{\mh})
\otimes_{U({\widehat{\mh}}^{+} \oplus \complex K)} \complex.$$
Let $Q$ be the root lattice of $\fin$ and $\varepsilon: Q \times Q \rightarrow \{\pm 1\}$ be a bimultiplicative
cocycle with the property $\varepsilon(\alpha, \beta) \varepsilon(\beta,
\alpha) = (-1)^{( \beta \mid \alpha)}.$ Let $\complex_{\varepsilon} [Q]$
be the twisted group algebra
with multiplication $e^{\alpha}e^{\beta} = \varepsilon(\alpha, \beta)
e^{\alpha + \beta}$. The lattice vertex algebra $V_{Q}$ defined as
$$V_{Q} = \pi_1 \otimes_{\complex}{\complex_{\varepsilon} [Q]}.$$
The action of $\widehat{\mh}$ on $1 \otimes e^{\beta}$ is given by $(h
\otimes t^n) (1 \otimes e^{\beta}) = \delta_{n0} \, \beta(h)\,
(1 \otimes e^{\beta})$ for $n \geq 0$ and $h \in \bar{\mh}$.
We write $ht^n$ for $h \otimes t^n$.  For $v \in V_Q$, let the state-field correspondence be denoted $Y(v, z) = \sum_{n \in \integers} v_{(n)} \,z^{-n-1}$. We have 
$$Y( h t^{-1} \otimes 1, z) = \sum_{s \in \mathbb{Z}} h_s z^{-s-1},$$
where $h_s$ denotes the operator $h t^s \otimes \mathrm{id}$ on $V_{Q}$. Also
$$Y (1 \otimes e^{\beta}, z) = e^{\beta} z^{\alpha_0} \,\mathrm{exp}\left(
-\sum_{n <0}{\frac{z^{-n}}{n} \beta_n} \right)\,\mathrm{exp}\left( -\sum_{n
>0}{\frac{z^{-n}}{n} \beta_n} \right), $$
$h \in \bar{\mh}$ and $\beta \in Q$. In the above expression, $e^{\beta}$ is
the left multiplication operator by $1 \otimes e^\beta$ and $z^{\beta_0}$
acts as $z^{\beta_0}(\eta \otimes e^{\alpha}) = z^{(\beta | \alpha)} (\eta
\otimes e^{\beta}),$
where $\eta \in \pi_1$. We identify $\mhb$ and $\mhb^*$ via the normalized Killing form.

\subsection{} The $\walg$-algebra is the vertex subalgebra of $V_Q$ defined by:
\begin{equation}\label{eq:wdef}
\walg := \bigcap_{X \in \fin} \ker_{V_Q} X_{(0)} = \pi_1 \cap \bigcap_{i=1}^{\rank \fin} \ker_{V_Q} (f_i)_{(0)}.
\end{equation}
Here $\pi_1 = \bigcap_{h \in \bar{\mh}} \ker_{V_Q} h_{(0)}$ is the Fock space of the homogeneous Heisenberg subalgebra of $\aff$ and is isomorphic to a Heisenberg vertex algebra.  In \eqref{eq:wdef}, we identify $\fin$ with the subspace of $V_Q$ spanned by $\mhb$ and the $1 \otimes e^\alpha$ for $\alpha$ a root of $\fin$. 

Let $d_1 \leqslant d_2 \leqslant \cdots \leqslant d_\ell$ be the list of degrees of $\fin$. We recall the following important theorem, due to Feigin and Frenkel \cite{feigin-frenkel}:
\begin{theorem}[Feigin-Frenkel]\label{thm:ff} 
There exist elements $\omd[i] \in \walg$ of degree $d_i$ such that $\walg$ is freely generated by  $\omd[1],\, \omd[2],\, \cdots,\, \omd[\ell]$ as a vertex algebra.
\end{theorem}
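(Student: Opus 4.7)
The plan is to use the free-field realization already built into the definition of $\walg$: it sits inside the Heisenberg Fock space $\pi_1$ as the simultaneous kernel of the screening operators $\widetilde Q_i := (f_i)_{(0)}$ restricted to $\pi_1$, for $1 \le i \le \ell$. Each $\widetilde Q_i$ is a derivation of the vertex algebra structure on $\pi_1$ and preserves the conformal grading, so $\walg$ inherits a grading by conformal weight. I would aim to (i) produce a homogeneous generator in each degree $d_i$, and (ii) check that these generators are free.

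For step (i) I would pass to a quasi-classical limit. Equipping $\pi_1$ with the standard Li filtration, the associated graded is the commutative Poisson vertex algebra $\complex[\mhb \otimes t^{-1}\complex[t^{-1}]]$. The screenings degenerate to commuting derivations in this limit, and their joint kernel can be identified with the classical $\walg$-algebra $\walg^{\mathrm{cl}}(\fin)$ of Drinfeld and Sokolov. By Kostant's theorem on the principal slice, in its jet-space formulation, $\walg^{\mathrm{cl}}(\fin)$ is a free jet polynomial algebra on generators of conformal weights $d_1,\dots,d_\ell$. I would choose such classical generators and lift each one to a homogeneous element $\omd[i] \in \walg$ of conformal weight $d_i$; the lift exists because the Li filtration on $\walg$ is compatible with the conformal grading and the quotient $\gr \walg$ is free.

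For step (ii) I would argue by a graded character comparison. The free strongly generated vertex algebra on $\ell$ generators of conformal weights $d_i$ has character
\[
\prod_{i=1}^{\ell} \prod_{n \geq d_i} (1-q^n)^{-1}.
\]
The natural vertex algebra homomorphism from this free object to $\walg$ sending abstract generators to the $\omd[i]$ becomes surjective after passing to associated gradeds by Kostant, and a standard induction on conformal weight upgrades this to surjectivity of the original map. Meanwhile, one computes $\mathrm{ch}\,\walg$ independently --- either via the alternating sum over the Felder/screening complex in simply-laced type, or via the cohomology-vanishing theorem for quantum Drinfeld-Sokolov reduction --- and finds the same product. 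Equality of characters then forces the surjection to be an isomorphism, which is exactly free generation.

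The main obstacle is obtaining the character identity $\mathrm{ch}\,\walg = \prod_i \prod_{n \ge d_i} (1-q^n)^{-1}$ rigorously: this is the deepest input and is precisely where either a nontrivial cohomology-vanishing theorem for the BRST complex or a careful combinatorial analysis of the Felder complex (using the simply-laced hypothesis to secure exactness) must be deployed. Once that identity and Kostant's classical description are available, lifting the generators and comparing with the free vertex algebra is essentially bookkeeping.
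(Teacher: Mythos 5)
First, a point of comparison: the paper does not prove Theorem~\ref{thm:ff} at all --- it is quoted from Feigin--Frenkel \cite{feigin-frenkel} (and restated later via \cite{arakawa2007}), so there is no internal argument to measure your attempt against. Your outline is essentially the standard route from the literature: realize $\walg$ as the joint kernel of screening operators in the Fock space, pass to a (quasi-)classical limit where Kostant's theorem on the principal slice gives a free Poisson vertex algebra on generators of weights $d_1,\dots,d_\ell$, lift generators, and close the argument with a character comparison coming from the quantum Drinfeld--Sokolov reduction or from exactness of the screening (Felder-type) complex. As a roadmap this is faithful to how the theorem is actually proved.

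Judged as a proof, however, there are genuine gaps, and they sit exactly at the points you defer. First, the identification ``$\gr\walg \cong \walg^{\mathrm{cl}}(\fin)$, hence the classical generators lift'' is not automatic: for the Li filtration one only gets the inclusion of $\gr$ of the joint kernel into the joint kernel of the degenerated screenings, and the reverse inclusion --- which is what you need both to produce homogeneous $\omd[i]\in\walg$ of degree $d_i$ and to run the surjectivity induction --- is essentially equivalent to the character identity you postpone to step (ii). As written, step (i) silently assumes the hard input of step (ii). Second, that character identity is itself the substance of the theorem: ``the alternating sum over the Felder/screening complex'' requires proving the complex is a resolution (exactness away from degree zero), and ``the cohomology-vanishing theorem for quantum Drinfeld--Sokolov reduction'' requires in addition identifying the kernel-of-screenings definition \eqref{eq:wdef} used here with the BRST-defined $\wk[1-\hc]$ --- an identification which in this paper is only obtained through the Miura map (Proposition~\ref{prop:miuraiso}) by citing \cite{FKRW, acl}. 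A smaller inaccuracy: $(f_i)_{(0)}$ does not preserve $\pi_1$; it maps $\pi_1$ into the Fock module attached to $-\alpha_i$ inside $V_Q$, so it is a derivation valued in a module rather than a derivation of $\pi_1$ itself (its kernel is still a vertex subalgebra, so this is cosmetic). In short, your proposal is a correct guide to the published proof, but the two deepest steps are used as black boxes, so it does not constitute an independent proof of the statement.
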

The field $Y(\omd[i],z)$ has conformal weight $d_i$ and we write:
\[ Y(\omd[i], z) = \sum_{n \in \integers} \omd[i]_n \,z^{-n - d_i}.\]

\subsection{} The module $\vacm$ admits many realizations, one for each inner automorphism of $\fin$ \cite{KKLW}. Let $\sigma$ denote the Coxeter element in the Weyl group $W(\fin)$. This extends to inner automorphisms of $\fin$ and of the lattice vertex algebra $V_Q$; we denote both of these by $\sigma$ again.

Likewise, consider the inner automorphism $\zeta:=\exp\ad \left(2\pi i \rhoc^{\vee}/\hc\right)$  of $\fin$, where $\rhoc^{\vee}$ is a half-sum of positive coroots of $\fin$ and $\hc$ is the dual Coxeter number of $\fin$; this extends to an inner automorphism of $V_Q$ which we denote $\zeta$ again; $\sigma$ and $\zeta$ are conjugate under an inner automorphism (in the contexts of both $\fin$ and $V_Q$) \cite[Section 4]{GSV}.

Let $M_\sigma$ denote the principal realization of $\vacm$ and $M_\zeta$ the realization corresponding to $\zeta$ \cite[Section 4]{GSV}. We have that $M_\sigma$ (resp. $M_\zeta$) is a $\sigma$-twisted (resp. $\zeta$-twisted) representation of $V_Q$. Both $M_\sigma$ and $M_\zeta$ become representations of the affine Lie algebra $\aff$ (see \cite[Section 4]{GSV}) and each is isomorphic to $\vacm$.

\subsection{} Since $\walg$ is pointwise fixed by every inner automorphism of $V_Q$, the restriction of $M_\sigma$ to $\walg$ becomes an untwisted $\walg$-module. We let the action of the 
generators $\omd[p]$ on $M_\sigma$ be given by the modes $\omd[p]_i(\sigma)$ for $i \in \integers$.
The following is our main theorem.
\begin{theorem}\label{thm:mainthm}
Let $\aff$ be a simply-laced affine Lie algebra. Consider the following vectors of $M_\sigma \cong L(\Lambda_0)$:
\begin{equation}\label{eq:ffbasis-twisted}
\omd[p_1]_{k_1}(\sigma)\; \omd[p_2]_{k_2}(\sigma)\; \cdots\; \omd[p_r]_{k_r}(\sigma) \,\hwvec,
\end{equation}
\smallskip
where (i) $r \geqslant 0$\quad(ii) $\ell \geqslant p_1 \geqslant p_2 \geqslant \cdots \geqslant p_r \geqslant 1$ \quad(iii) $k_j \leqslant -1$ for all $j$\quad   and (iv) if $p_i = p_{i+1}\,$, then $k_i \leqslant k_{i+1}$.

The subspace $F^d Z_n$ of the Brylinski filtration on $Z$ has a basis given by the vectors in \eqref{eq:ffbasis-twisted} satisfying $\sum_{i=1}^r d_{p_i} \leq d$ and $\sum_{i=1}^r k_i = -n$.
\end{theorem}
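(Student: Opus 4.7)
\medskip

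The strategy is to identify the $\mhb$-invariant subspace $Z$ with an irreducible Verma module of $\walg(\fin)$ acting on $M_\sigma$, and then to transport the natural PBW basis of that module (arising from Theorem~\ref{thm:ff}) back to $Z$ in such a way that the filtration degree is visible. Since $\walg\subset V_Q$ is pointwise fixed by every inner automorphism of $V_Q$ (in particular by $\sigma$), the $\sigma$-twisted realisation $M_\sigma\cong\vacm$ is automatically an \emph{untwisted} $\walg$-module; because $\walg$ commutes with the action of $\mhb$, the subspace $Z$ is $\walg$-stable. A weight argument against the principal grading of $M_\sigma$ shows that $\hwvec$ is annihilated by every $\omd[p]_n(\sigma)$ with $n>0$, so Frobenius reciprocity yields a $\walg$-module surjection from a Verma module $\mathbf{M}$ of $\walg$ onto $Z$.

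The heart of the proof, and the step where I expect the main difficulty, is proving that $\mathbf{M}$ is irreducible. To this end, I realise $Z$ as a pullback of an oscillator representation via the Miura embedding $\walg\hookrightarrow\fock$: the principal twist turns $M_\sigma$ into an oscillator representation of the Heisenberg subalgebra of $V_Q$, and restricting along the Miura map gives $Z$ its $\walg$-structure. Theorem~\ref{thm:lrbgenheis} asserts that such a pullback is an irreducible $\walg$-Verma module provided the underlying Heisenberg weight is generic in the precise sense dictated by the Arakawa-type criterion of Proposition~\ref{prop:wver-irr}; verifying this genericity for the specific weight arising from the level-one data of $\vacm$ and the principal twist is the main technical obstacle. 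Once irreducibility is in hand, Theorem~\ref{thm:ff} gives the character of a free $\walg$-Verma module as $\prod_{k=1}^{\ell}\prod_{n\ge 1}(1-t^{d_k}q^n)^{-1}$, which by \eqref{eq:hilbZ} is exactly the Hilbert series of $Z$; matching characters forces the surjection $\mathbf{M}\twoheadrightarrow Z$ to be an isomorphism. Under this isomorphism, the PBW basis of $\mathbf{M}$ provided by Theorem~\ref{thm:ff} maps precisely to the list \eqref{eq:ffbasis-twisted}, which is therefore linearly independent and spans $Z$.

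It remains to check compatibility with the Brylinski filtration. Through the Miura map together with the principal twist, each mode $\omd[p]_k(\sigma)$ is realised on $M_\sigma$ as a polynomial of total polynomial-degree at most $d_p$ in the modes of the principal Heisenberg $\prinheis$. Since $\hwvec$ is annihilated by $\prinheis^+$ and $\prinheis^+$ is abelian, each $x\in\prinheis^+$ acts as a derivation on the ring of $\prinheis^-$-polynomials applied to $\hwvec$, lowering polynomial-degree by one. Consequently the vector \eqref{eq:ffbasis-twisted} is a $\prinheis^-$-polynomial of degree at most $\sum_i d_{p_i}$ applied to $\hwvec$, and hence lies in $F^{\sum_i d_{p_i}}\vacm$; the weight accounting places it in $Z_n$ for $n=-\sum_i k_i$. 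Counting vectors with $\sum_i d_{p_i}\le d$ and $\sum_i k_i=-n$ reproduces the coefficient of $t^d q^n$ in \eqref{eq:hilbZ}, and combined with the linear independence from the previous step this forces these vectors to form a basis of $F^d Z_n$, completing the proof.
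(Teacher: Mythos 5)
Your overall architecture matches the paper's: realize $Z$ inside $M_\sigma$ as a $\walg$-stable subspace, show it is an irreducible Verma module of $\walg$ obtained as a Miura pullback of a Heisenberg Fock module (using the Arakawa-type genericity criterion), and then transport the PBW monomials in the $\omd[p]_k(\sigma)$ to $Z$, bounding their Brylinski degree by $\sum_i d_{p_i}$ via the derivation action of $\prinheis^+$ and concluding with the dimension count against \eqref{eq:hilbZ}. That last part of your argument is sound and is exactly the type-independent deduction the paper imports from \cite[\S 9]{GSV}. However, there is a genuine gap at the step you yourself flag as ``the main technical obstacle'': you never identify \emph{which} Fock module $Z$ is, nor verify genericity, and your proposed route to this identification does not work as stated. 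You write that ``the principal twist turns $M_\sigma$ into an oscillator representation of the Heisenberg subalgebra of $V_Q$, and restricting along the Miura map gives $Z$ its $\walg$-structure.'' This conflates two different Heisenberg algebras: the Miura map $\Upsilon$ embeds $\walg$ into the \emph{homogeneous} Heisenberg Fock vertex algebra $\fock\subset V_Q$, whereas the principal twist makes $M_\sigma$ an oscillator module over the \emph{principal} Heisenberg $\prinheis$. Since the Coxeter element $\sigma$ acts on $\mhb$ without nonzero fixed vectors, $M_\sigma$ is only a $\sigma$-twisted $\fock$-module, and (as the paper notes) $Z$ is \emph{not} $\fock$-invariant in the realization $M_\sigma$; so you cannot directly view $Z$ as a $\Upsilon$-pullback of an untwisted oscillator representation there.

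The paper's resolution, which is missing from your proposal, is to replace $\sigma$ by the conjugate inner automorphism $\zeta=\exp\ad(2\pi i\rhob^{\vee}/\hc)$, which fixes $\mhb$ pointwise: in the realization $M_\zeta$ the space $Z$ is $\fock$-stable and is identified (from \cite[\S 7]{GSV}) with the Fock module $\pi_{1,\,\rhob/\hc}$, while $M_\zeta\cong M_\sigma$ as $\walg$-modules because $\walg$ is fixed by all inner automorphisms of $V_Q$. With the weight pinned down as $\bar{\lambda}=\rhob/\hc$, the genericity hypothesis is then an easy check, namely $(\rhob/\hc\,|\,\beta)\notin\integers$ for all roots $\beta$ of $\fin$ (Proposition~\ref{prop:khc1}), which feeds into Theorem~\ref{thm:lrbgenheis} to give $Z\cong\wverma[\rhob/\hc-\rhob]$ irreducible. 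Without this change of realization and the explicit weight, your irreducibility step---and hence the isomorphism $\mathbf{M}\cong Z$ on which the whole basis argument rests---remains unestablished. (A minor further point: Frobenius reciprocity only gives a map from a Verma module onto the $\walg$-submodule generated by $\hwvec$, not onto $Z$; this is harmless in your scheme since irreducibility plus the character comparison already yields the isomorphism, but the surjectivity claim as written is unjustified.)
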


\subsection{}
Given $\bar{\lambda} \in \mhb^*$, let $\gamma_{\bar{\lambda}}: U\fin \to \complex$ denote the corresponding central character of $\fin$. We let $\wverma[\bar{\lambda}]$ denote the Verma module of $\walg$ indexed by 
$\gamma_{\bar{\lambda}}$ (see Remark~\ref{rem:w-and-wk} and \S\ref{sec:wkmodule}). We refer the reader to sections~\ref{sec:wstructure}, \ref{sec:wreps} for a discussion on Verma modules of $\walg$. The following key result, of independent interest, is the essential ingredient in our proof of Theorem~\ref{thm:mainthm}:
\begin{theorem}\label{thm:zwalg}
  Let $\aff$ be a simply-laced affine algebra. The space $Z$ is a $\walg$-invariant subspace of $M_\sigma$. Further, $Z$ is an irreducible $\walg$-module and
  is isomorphic to $\wverma[\rhob/\coxnum - \rhob]$ as $\walg$-modules.
\end{theorem}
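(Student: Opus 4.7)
The plan is to prove the three claims (invariance, irreducibility, and the Verma identification) by working in the $\zeta$-realization $M_\zeta$ rather than $M_\sigma$ directly, and then identifying $Z$ as a Miura-map pullback of a Heisenberg Fock module. Since $\sigma$ and $\zeta$ are conjugate under an inner automorphism of $V_Q$ that fixes $\walg$ pointwise, $M_\sigma \cong M_\zeta$ as $\walg$-modules, so it suffices to work with $M_\zeta$. The key advantage is that $\zeta = \exp\ad(2\pi i \rhob^\vee/\coxnum)$ lies in the Cartan, so the $\zeta$-twisting grades $M_\zeta$ by $\mhb$-weights, and $Z$ is precisely the zero-weight subspace.

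For $\walg$-invariance, I would argue as follows: by \eqref{eq:wdef} every element of $\walg$ lies in $\pi_1 = \bigcap_{h \in \mhb} \ker h_{(0)}$, hence all modes $\omd[i]_n$ (under the $\zeta$-twisted field-state correspondence) commute with the zero-mode action of $h \in \mhb$ on $M_\zeta$. Therefore $\walg$ preserves the $\mhb$-invariant subspace $Z$.

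For irreducibility and the identification with a Verma module, I would invoke the Miura map $\mu: \walg \hookrightarrow \pi_1$ (Section~\ref{sec:wstructure}) and the fact that pullbacks of generic Heisenberg oscillator representations yield irreducible Verma modules of $\walg$ (Theorem~\ref{thm:lrbgenheis}). Concretely, the Heisenberg action on $M_\zeta$ restricts to an action on $Z$, and one identifies this restricted action with a Fock module $\fock[\bar{\lambda}]$ whose highest weight $\bar{\lambda}$ is dictated by the zero-mode eigenvalue of $v_{\Lambda_0}$. Because the $\zeta$-twisting shifts zero-modes by $\rhob^\vee/\coxnum$ and $\mhb$ is identified with $\mhb^*$ via the normalized Killing form, this eigenvalue equals $\rhob/\coxnum$. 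The standard Miura shift built into the correspondence between Heisenberg highest weights and $\walg$-Verma highest weights then produces the parameter $\rhob/\coxnum - \rhob$, giving the asserted isomorphism $Z \cong \wverma[\rhob/\coxnum - \rhob]$.

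Irreducibility then follows from the criterion of Proposition~\ref{prop:wver-irr}: one checks that the highest weight $\gamma_{\rhob/\coxnum - \rhob}$ satisfies the genericity/non-degeneracy hypothesis. This reduces to showing that $(\rhob/\coxnum, \alpha^\vee)$ avoids the problematic integer values for positive roots $\alpha$ of $\fin$, which is immediate in simply-laced types from the bound $1 \leq (\rhob, \alpha^\vee) \leq \coxnum - 1$.

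The main obstacle I anticipate is pinning down the exact highest weight of $v_{\Lambda_0}$ in the free-field realization and reconciling the two independent sources of $\rhob$-shifts: the $\zeta$-twist contributes $\rhob^\vee/\coxnum$ via the grading operator, while the Miura map contributes the standard $-\rhob$ offset familiar from Drinfeld-Sokolov reduction. Aligning these two shifts so that the final parameter is exactly $\rhob/\coxnum - \rhob$ requires a careful computation of the image of $\omd[i]_0$ under the Miura map evaluated on $v_{\Lambda_0}$, and is where the bulk of the technical work in Section~\ref{sec:heismodw} will go. Once this weight computation and the genericity verification are established, Theorem~\ref{thm:lrbgenheis} and Proposition~\ref{prop:wver-irr} assemble the remaining pieces.
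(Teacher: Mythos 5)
Your proposal follows essentially the same route as the paper: pass to the realization $M_\zeta$ via the $\walg$-module isomorphism $M_\sigma \cong M_\zeta$, identify $Z$ as the Fock module $\pi_{1,\,\rhob/\coxnum}$ of $\fock$, and then invoke the Miura-map pullback result (Theorem~\ref{thm:lrbgenheis}, packaged in the paper as Theorem~\ref{thm:wfock}) together with the genericity check $(\rhob/\coxnum \mid \beta) \notin \integers$ (the content of Proposition~\ref{prop:khc1}) to obtain irreducibility and the identification with $\wverma[\rhob/\coxnum - \rhob]$. This is correct and matches the paper's argument, with the only difference being that the paper outsources the identification $Z \cong \pi_{1,\,\rhob/\coxnum}$ to the earlier work \cite{GSV} rather than recomputing the highest weight of $\hwvec$.
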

In order to prove Theorem~\ref{thm:zwalg}, it will be more convenient to work with the realization $M_\zeta$. We recall from \cite[Section 6]{GSV} that:
\begin{equation}\label{eq:msigmzeta}
M_\zeta \cong M_\sigma \text{ as } \walg\text{-modules.} 
\end{equation}
We note that $\walg \subset \fock \subset V_Q$. In the realization $M_\sigma$, $Z$ is not $\fock$-invariant. 
In $M_\zeta$, $Z$ is $\fock$-invariant and becomes an untwisted $\fock$-module. Further, $Z$ is isomorphic to the irreducible highest weight representation $\pi_{1, \,\rhob/\coxnum}$ of $\fock$ with highest weight $\rhob/\coxnum$ \cite[Section 7]{GSV}. We now have the following general theorem concerning restrictions of $\fock$-modules to $\walg$. 
\begin{theorem}\label{thm:wfock}
Consider $\walg \subset \fock$. Let $\pi_{1, \bar{\lambda}}$ be an irreducible highest weight representation of $\fock$ with highest weight $\bar{\lambda}$. If $(\bar{\lambda} | \beta) \notin \integers$ for all roots $\beta$ of $\fin$, then the restriction to $\walg$ of $\pi_{1, \bar{\lambda}}$ is irreducible, and is isomorphic to the Verma module $\wverma[\bar{\lambda}-\rhob]$.
\end{theorem}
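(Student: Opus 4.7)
My plan is to build an explicit isomorphism $\wverma[\bar{\lambda}-\rhob] \xrightarrow{\sim} \pi_{1,\bar{\lambda}}$ of $\walg$-modules by constructing a nontrivial map out of the Verma and then upgrading it to an isomorphism via irreducibility together with a character comparison. The candidate highest weight vector in the restricted module is the Fock vacuum $v_{\bar{\lambda}}\in\pi_{1,\bar{\lambda}}$, characterized by $h_n v_{\bar{\lambda}} = 0$ for $n>0$ and $h_0 v_{\bar{\lambda}} = \bar{\lambda}(h)\,v_{\bar{\lambda}}$ for $h\in\mhb$.

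First I would check that $v_{\bar{\lambda}}$ is a $\walg$-highest weight vector. Since each Feigin-Frenkel generator $\omd[p]$ has conformal weight $d_p$, the mode $\omd[p]_n$ lowers the $L_0$-grading by $n$; but $v_{\bar{\lambda}}$ occupies the minimal $L_0$-eigenspace of $\pi_{1,\bar{\lambda}}$, so $\omd[p]_n v_{\bar{\lambda}} = 0$ for all $n>0$ and all $p$. Next I would compute the scalars $\omd[p]_0 v_{\bar{\lambda}} = \gamma_p\, v_{\bar{\lambda}}$ from the Miura formulas recalled in Section~\ref{sec:wstructure}; the characteristic $-\rhob$ shift intrinsic to the Miura map forces the collection $(\gamma_p)_{p=1}^{\ell}$ to coincide with the central character $\gamma_{\bar{\lambda}-\rhob}$ (for $p=1$ this reduces to the familiar formula $L_0 v_{\bar{\lambda}} = \tfrac{1}{2}(\bar{\lambda}\mid\bar{\lambda}-2\rhob)\,v_{\bar{\lambda}}$). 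By the universal property of Verma modules of $\walg$, this yields a nonzero $\walg$-homomorphism $\varphi:\wverma[\bar{\lambda}-\rhob]\to\pi_{1,\bar{\lambda}}$ sending the canonical generator to $v_{\bar{\lambda}}$.

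I would then invoke Proposition~\ref{prop:wver-irr} to conclude that $\wverma[\bar{\lambda}-\rhob]$ is itself irreducible under the hypothesis $(\bar{\lambda}\mid\beta)\notin\integers$ for every root $\beta$ of $\fin$: the substitution $\bar{\mu} = \bar{\lambda}-\rhob$ into the Arakawa-type integrality locus should yield exactly this condition. Granted irreducibility of the source, $\varphi$ is automatically injective. Finally I would compare graded characters: by Theorem~\ref{thm:ff}, $\wverma[\bar{\lambda}-\rhob]$ is freely spanned by ordered monomials in the $\omd[p]_{-n}$ for $1\le p\le\ell$ and $n\ge 1$, producing the character $\prod_{p=1}^{\ell}\prod_{n\ge 1}(1-q^n)^{-1}$; on the other hand $\pi_{1,\bar{\lambda}}$, as a free Fock module over a rank-$\ell$ Heisenberg algebra, carries the same graded character. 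Hence $\varphi$ is a character-preserving injection, therefore an isomorphism.

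The main obstacle I anticipate is the irreducibility step: since Proposition~\ref{prop:wver-irr} is phrased (via Arakawa) in the language of affine weights and the Drinfeld-Sokolov reduction functor, one must trace the reduction carefully and verify that the affine roots whose integer pairings obstruct irreducibility of the $\walg$-Verma descend, after the Harish-Chandra shift by $-\rhob$, to precisely the finite roots of $\fin$. A more routine secondary task is to pin down the explicit Miura-image formulas for $\omd[p]_0$ to confirm the $-\rhob$ shift and rule out other parameter identifications.
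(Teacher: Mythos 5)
Your overall skeleton --- produce a nonzero $\walg$-map out of $\wverma[\bar{\lambda}-\rhob]$, get injectivity from irreducibility of the Verma module, and finish with a character comparison --- is the same as the paper's, and the genericity verification you single out as the main obstacle is in fact a short computation (the paper's Proposition~\ref{prop:khc1}: $\lambda-\rhob+\rho=\Lambda_0+\bar{\lambda}$, so pairings with real roots $\beta+n\delta$ give $n+(\bar{\lambda}\mid\beta)\notin\integers$ and with imaginary roots $n\delta$ give $n\neq 0$). The genuine gap is the step you treat as routine: computing the zero-mode eigenvalues $\omd[p]_0\,v_{\bar{\lambda}}$ ``from the Miura formulas recalled in Section~\ref{sec:wstructure}'' and concluding that they realize the central character $\gamma_{\bar{\lambda}-\rhob}$. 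The paper records the Miura image of the conformal vector only (equation~\eqref{eq:kw-conf}); explicit Miura expressions for the higher generators $\omd[2],\dots,\omd[\ell]$ are not recalled and are not available in a type-uniform way --- avoiding exactly this kind of explicit, type-dependent computation is the point of the paper. The assertion that the $\ell$ zero modes acting on the Fock highest weight vector give the $\rhob$-shifted Harish--Chandra character is essentially the content of the theorem (the identification of the highest weight), so it must either be proved or quoted as a nontrivial known result (e.g.\ the identification of Zhu's algebra of $\wk$ with the center of $U(\fin)$ and its compatibility with the Miura map and the Harish--Chandra homomorphism). The paper sidesteps this entirely: the nonzero map is produced functorially by applying $\hds$ to the canonical map $M(\lambda)\to\wakim$, using Arakawa's results to identify $\hds(M(\lambda))\cong\wverma[\bar{\lambda}-\rhob]$ (Theorem~\ref{thm:lrbgen}) and $\hds(\wakim)\cong\pi_{1,\bar{\lambda}}$, so no zero-mode formulas are ever needed.

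Two smaller points. Your sample computation for $p=1$ is incorrect in the relevant case $k+\hc=1$: by \eqref{eq:kw-conf} the correction term vanishes there, so $\Upsilon(\omega)=\omega^{\mathrm{Sug}}$ and the eigenvalue is $\tfrac{1}{2}(\bar{\lambda}\mid\bar{\lambda})$, not $\tfrac{1}{2}(\bar{\lambda}\mid\bar{\lambda}-2\rhob)$; this is still consistent with the label $\gamma_{\bar{\lambda}-\rhob}$ (whose Harish--Chandra orbit is that of $(\bar{\lambda}-\rhob)+\rhob=\bar{\lambda}$), but it shows the $\rhob$-shift cannot be read off from $L_0$, and for $\ell>1$ the $L_0$-eigenvalue does not determine the central character in any case. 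Also, the character of the $\walg$-Verma module is not an immediate consequence of Theorem~\ref{thm:ff}: free generation of the vertex algebra does not by itself give a PBW-type basis of the Verma module; the paper quotes the character from Arakawa (equation~\eqref{eq:vermachar}). If you supply these two inputs from the literature --- the zero-mode/Harish--Chandra identification and the Verma character --- your more hands-on route does close and would be a genuine alternative to the paper's Drinfeld--Sokolov argument; as written, it has a hole precisely at the crux.
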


The argument that Theorem~\ref{thm:mainthm} follows from Theorem~\ref{thm:zwalg} was formulated in a type independent manner in \cite[\S 9]{GSV} and can be applied as-is here. In view of Equation~\eqref{eq:msigmzeta} and the fact that $\bar{\lambda} = \rhob/\coxnum$ satisfies the hypothesis of Theorem~\ref{thm:wfock}, it is clear that Theorem~\ref{thm:zwalg} follows from Theorem~\ref{thm:wfock}. Thus, it remains only to prove Theorem~\ref{thm:wfock}; this is done in Sections~\ref{sec:wreps}, \ref{sec:heismodw}.

We remark here that in \cite{GSV}, we only showed (in type $A$) that $Z$ is an irreducible Verma module, but did not identify its highest weight. In this paper, we identify that precisely.

% \begin{theorem}\label{thm:main-thm-intro}
% Let $\aff$ be an untwisted affine Lie algebra of type $A, D, E$. Consider the principal realization of $\vacm$ as a $\walg$-module and let $\hwvec$ denote a highest weight vector of $\vacm$.
% \begin{enumerate}
% \item[(a)] The $\walg$-submodule of $\vacm$ generated by $\hwvec$ is exactly the space $Z = \vacm^{\mhb}$.
% \item[(b)] $Z$ is isomorphic to a Verma module of $\walg$, and is irreducible.
% \item[(c)] The PBW basis of $Z$ (comprising the vectors below) is compatible with the Brylinski filtration:
%   \begin{equation}\label{eq:ffbasis-twisted}
% \omd[p_1]_{k_1}(\sigma)\; \omd[p_2]_{k_2}(\sigma)\; \cdots\; \omd[p_r]_{k_r}(\sigma) \,\hwvec,
% \end{equation}
% \smallskip
% where (i) $r \geqslant 0$\quad(ii) $\ell \geqslant p_1 \geqslant p_2 \geqslant \cdots \geqslant p_r \geqslant 1$ \quad(iii) $k_j \leqslant -1$ for all $j$\quad   and (iv) if $p_i = p_{i+1}\,$, then $k_i \leqslant k_{i+1}$.
% \end{enumerate}
% \end{theorem}

% CHANGE THIS ! 
% In turn, this establishes parts (a) and (b) of Theorem~\ref{mainthm}. 

\section{Structure of $\wk$}\label{sec:wstructure}
\subsection{Universal affine vertex algebra} Let $k \in \mathbb{C}$. The universal affine vertex algebra $V_{k}(\fin)$ is defined as
$$V_{k}(\fin) := U(\mg) \otimes_{U(\fin \otimes \complex [t] \oplus \complex K \oplus \complex d)}{\complex},$$ where $\fin \otimes \complex [t] \oplus d$ acts trivially on $\complex$ and $K$ acts as multiplication by $k$.
The $\mg$-module $V_{k}(\fin)$ has a natural vertex algebra structure with ${\bf{1}} = 1 \otimes 1$ playing the role of the vacuum vector.
%Let $\{J^{a}: a = 1, \cdots, \mathrm{dim} \fin\}$ be an ordered basis of $\fin$.
For $x \in \fin$, the field corresponding to the element $(x \otimes t^{-1}){\bf{1}}$ is given by
$$Y((x \otimes t^{-1}){\bf{1}}, z) = x(z) : = \sum_{n \in \mathbb{Z}}{((x \otimes t^{n}) \otimes 1) z^{-n -1}}.$$

\subsection{$\mathscr{W}$-algebras}
Let $\fin = 
\bar{\mathfrak{n}}_{-} \oplus \mhb \oplus \bar{\mathfrak{n}}_{+}$ be the 
standard triangular decomposition of $\fin$.
Let $f$ be a principal nilpotent element of $\bar{\mg}$, and $\{e, h, f\}$ be the $\mathfrak{sl}_2$-triple associated with $f$. 
Let us recall $H^{\bullet}_{DS}(\cdot)$, the Drinfeld-Sokolov ``+" reduction cohomology functor associated with $f$ (for complete definition see \cite{acl, BFEF, FKW}):
$$H^{\bullet}_{DS}(M) = H^{\frac{\infty}{2} + \bullet}(L \,\bar{\mathfrak{n}}_{-} , M \otimes \complex_{\eta}).$$
Here $M$ is a $\mg$-module from category $\mathcal{O}$ on which $K$ acts as $kI$. The action of $L \,{\bar{\mathfrak{n}}}_{-} := {\bar{\mathfrak{n}}}_{-} \otimes \complex[t, t^{-1}]$ on $M \otimes \complex_{\eta}$ is diagonal and the character $\eta: L \,{\bar{\mathfrak{n}}}_{-} \rightarrow \complex,$ $\eta(y \otimes t^n) = \delta_{n, -1}(f | y)$ gives the one-dimensional representation $\complex_{\eta}$ of $L \,{\bar{\mathfrak{n}}}_{-}$ \cite{acl}.
\begin{definition}
Let $k\in \complex$. The (principal) $\mathscr{W}$-algebra associated with $\bar{\mg}$ at level $k$ is defined as
$$\mathscr{W}_{k}(\bar{\mg}) = H^{0}_{DS}(V_{k}(\bar{\mg})).$$ 
\end{definition}
The $\mathscr{W}$ algebra $\mathscr{W}_{k}(\bar{\mg})$ has a vertex algebra structure for all $k \in \mathbb{C}$ (see Chapter 15 of \cite{FB} for details). When $k \neq -\hc$, $\wk$ admits a conformal vector $\omega$ with central charge $c(k)$ given by equation (245) of \cite{arakawa2007}:
\begin{equation}\label{eq:centralcharge}
c(k) = \rank \fin - 12 \left( (k+\hc) |\rhob^\vee|^2 - 2\langle \rhob, \rhob^\vee\rangle + \frac{|\rhob|^2}{k+\hc} \right).
\end{equation}

% Note that by the Feigin-Frenkel theorem (Theorem~\ref{thm:ff}), there is a set of free generators $\omd[i] \in \walg$ of degree $d_i$; the conformal vector $\omega$ is $\omd[1]$ in this notation, with $d_1=2$.
We now restate the Feigin-Frenkel theorem on generating fields for $\wk$ for general $k \in \complex$:
\begin{theorem} (\cite[Proposition 4.12.1]{arakawa2007}, \cite{FF})
There exist elements $\omd[i] \in \mathscr{W}_{k}(\bar{\mg})$ with degree (conformal weight) $d_i$, $1 \leq i \leq \ell$, such that  $\mathscr{W}_{k}(\bar{\mg})$ is freely generated by $\{\omd[i] : 1 \leq i \leq \ell\}.$ 
% In particular, $\mathscr{W}_{k}(\bar{\mg})$ has the following 
% PBW type basis:
% $$\{w_{(k_1)}^{r_1}w_{(k_2)}^{r_2} \cdots w_{(k_p)}^{r_p} | 0 \rangle \}$$
% with the conditions: (a) $p \geq 0$ (b) $\ell \geq r_1 \geq r_2 \geq \cdots \geq r_p \geq 1$ (c) $k_j \leq -d_{r_j}$ for all $j$ and (d) if $r_i = r_{i+1},$ then $k_i \leq k_{i+1}$.
\end{theorem}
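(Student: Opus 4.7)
The plan is to prove the Feigin--Frenkel generation theorem for $\wk$ at arbitrary level $k \in \complex$ via the BRST definition $\wk = H^0_{DS}(V_k(\fin))$, following the uniform strategy developed by Arakawa in \cite{arakawa2007}. The argument splits naturally into three steps: (a) a vanishing theorem for Drinfeld--Sokolov cohomology on $V_k(\fin)$, (b) a character computation via Euler--Poincar\'{e}, and (c) exhibition of generators together with a reconstruction argument establishing freeness.

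First I would introduce the Kazhdan filtration on the BRST complex computing $H^\bullet_{DS}(V_k(\fin))$, obtained by combining the natural $\mathbb{Z}_{\geq 0}$-grading on $V_k$ by conformal weight with the $\ad h$-grading coming from the principal $\mathfrak{sl}_2$-triple $\{e,h,f\}$ and an appropriate shift on the ghosts. The associated graded complex becomes a tensor product of a free-field vertex algebra (coming from $\pi_1$) with a finite-dimensional Koszul-type complex whose differential is built from $f \otimes t^{-1}$. Standard homological arguments rooted in Kostant's theorem (the centralizer $\fin^f = \ker \ad f$ has dimension $\ell$, and $\ad f$ together with $\ad h$ control the cohomology of the Koszul piece) force this graded cohomology to concentrate in degree zero. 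A convergent spectral sequence then lifts the vanishing to the unfiltered complex, giving $H^i_{DS}(V_k(\fin)) = 0$ for $i \neq 0$.

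Next, applying the Euler--Poincar\'{e} principle to the BRST complex (whose total bigraded character factorizes explicitly as a product over positive roots and ghost contributions) yields the closed formula
\[
\chi(\wk) \;=\; \prod_{i=1}^{\ell} \prod_{n \geq d_i} \frac{1}{1-q^n},
\]
which is exactly the character of a freely generated commutative vertex algebra on $\ell$ primary generators of conformal weights $d_1, \ldots, d_\ell$. To actually exhibit such generators, I would use the Miura embedding $\wk \hookrightarrow \fock$ together with the classical Chevalley/Kostant theorem on $S(\mhb^*)^{W(\fin)}$ to locate, inside $\gr \wk$, $\ell$ algebraically independent elements of degrees $d_1, \ldots, d_\ell$ (concentrated in the weight-$d_i$ part of the associated graded). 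Lifting these representatives back through the Kazhdan filtration produces genuine elements $\omd[i] \in \wk$ of conformal weight exactly $d_i$.

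Finally, combining the character identity from step (b) with the existence of the $\omd[i]$ from step (c), the standard reconstruction theorem for vertex algebras (see \cite[Theorem 4.4.1]{FB}) together with a PBW-type count forces the $\omd[i]$ to generate $\wk$ freely: any nontrivial relation among ordered monomials in the modes $\omd[i]_n$ applied to the vacuum would shrink the graded dimension strictly below the right-hand side of the character formula, a contradiction. The main obstacle is unquestionably step (a): at the critical level $k = -\hc$ the original Feigin--Frenkel argument proceeds cleanly through the center of $V_{-\hc}(\fin)$, but extending the vanishing theorem uniformly across all $k \in \complex$ requires the delicate Kazhdan-filtered spectral-sequence analysis, which is the technical heart of Arakawa's treatment and the step I would need to reproduce in greatest detail.
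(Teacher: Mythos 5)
The paper itself offers no proof of this statement: it is quoted verbatim from Arakawa \cite{arakawa2007} and Feigin--Frenkel \cite{FF}, so there is no in-paper argument to compare against. Your outline is essentially a reconstruction of the proof in those references --- vanishing of the Drinfeld--Sokolov BRST cohomology outside degree zero via a filtration and spectral sequence, the Euler--Poincar\'e computation of the character, and the exhibition of $\ell$ generators of conformal weights $d_1,\dots,d_\ell$ --- and the overall architecture is the standard and correct one. Two soft spots are worth flagging. First, the free-field factor appearing in the associated graded of the BRST complex for $V_k(\fin)$ is not the lattice-Heisenberg Fock space $\pi_1$ of this paper (which lives inside $V_Q$ and is tied to the $k=1-\hc$ Miura picture); in the BRST argument the relevant decomposition is of the complex $V_k(\fin)\otimes \bigwedge^{\frac{\infty}{2}+\bullet}$ into a subcomplex with trivial cohomology and a complement controlled by the centralizer $\fin^f$, so invoking $\pi_1$ there conflates two different free-field realizations. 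Second, your freeness argument is mildly circular as stated: the claim that any nontrivial relation among ordered monomials in the modes $\omd[i]_n$ would shrink the graded dimension below the product formula presupposes that those monomials already span $\wk$, i.e., strong generation, which is precisely part of what must be proved. In the standard treatment both spanning and algebraic independence are read off simultaneously from the associated graded algebra $\gr\wk$ with respect to the Kazhdan (or Li) filtration, which Kostant's theorem on the principal Slodowy slice identifies with a free differential polynomial algebra on generators of degrees $d_1,\dots,d_\ell$; the character identity then serves as a consistency check rather than as the engine of the proof. With those two adjustments your plan matches the cited proof, and you correctly identify the cohomology vanishing at arbitrary non-critical and critical levels as the technical heart.
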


\subsection{The Miura map}
%\textcolor{red}{Initial setup - leading up to how $\pi_{k+\hc, \bar{\lambda}}$ is being viewed as a $\wk$-module via the Miura map.}
We recall the definition of the Heisenberg vertex algebra. 
Let $\mhb$ be a finite dimensional vector space equipped with a symmetric non-degenerate bilinear form.  Let $\mathcal{H} = \mhb \otimes \mathbb{C}[t, t^{-1}] \oplus \complex K$ be the Heisenberg Lie algebra.
The Fock space of $\mathcal{H}$ at level $k \in \mathbb{C}$ is
\begin{equation} \label{eq:fockdef}
  \pi_k = \mathrm{Ind}_{\mathcal{H}^{+} \oplus \complex K}^{\mathcal{H}}\,{\complex} = U(\mathcal{H}) \otimes_{U(\mathcal{H}^{+} \oplus \complex K)} \complex
  \end{equation}
where $\mathcal{H}^{+} = \mhb \otimes \complex[t]$ acts trivially on $\complex$ and $K$ acts as $k$.  As a vector space $\pi_k$ is isomorphic to the symmetric algebra $S(\mhb \otimes t^{-1}\complex[t]^{-1})$. There is a grading on $\pi_k$ given by
$\pi_k = \bigoplus_{s \geq 0}{\pi_k^{[s]}}$, where ${\pi_{k}}^{[s]}$ is spanned by $(\mhb \otimes t^{k_1})\cdots (\mhb \otimes t^{k_r})$, with the condition $k_i < 0$ and $\sum{k_i} = s$.
The Fock space $\pi_k$ has a vertex algebra structure, with the state-field correspondence
$$Y(h \otimes t^{-1}, z)= \sum_{s \in \mathbb{Z}} (h \otimes t^{s})\, z^{-s-1}$$ for all $h \in \mhb.$ 
The vertex algebra $\pi_k$ is strongly generated by $\mhb \otimes t^{-1}$.

Let $\hc$ denote the dual Coxeter number of $\fin$. Let $k \neq -\hc$. We recall that there is an injective map of vertex algebras
\begin{equation}\label{eq:miura}
  \Upsilon: \wk \rightarrow \pi_{k + \hc}
\end{equation}
called the {\em Miura map} \cite{arakawa2017, FF1}.  It maps the conformal vector of $\wk$ to a conformal vector of degree 2 in the standard grading of $\pi_{k+\hc}$:
\begin{equation}\label{eq:kw-conf}
  \Upsilon(\omega) = \omega^{\scriptscriptstyle \mathrm{Sug}} +  \left(1 - \frac{1}{k+\hc}\right)\,\rhob_{(-2)}\bf{1}
  \end{equation}
where $\omega$ is the conformal vector of $\wk$ and $\omega^{\scriptscriptstyle \mathrm{Sug}}$ is the standard Sugawara conformal vector $\pi_{k + \hc}$ \cite{KW22}. Hence $\Upsilon$ preserves the grading on both sides.

\subsection{The $k+\hc=1$ case}\label{sec:k1hc}
The image of $\Upsilon$ (equation~\eqref{eq:miura}) generically coincides with the intersection of kernels of {\em screening operators} (see for example \cite[Lemma 5.4]{acl}). It is well-known that this fact holds for $k=1-\hc$ as well \cite[Remark 4.4]{FKRW}; this can be easily verified by appropriately modifying the proof of \cite[Proposition~5.5]{acl} and using the equality of characters argument of \cite[Theorem 4.2]{FKRW}. Since the intersection of the kernels of screening operators for $k=1-\hc$ coincides with $\walg$ (equation~\eqref{eq:wdef} and \cite{acl}), we have:

\begin{prop}\label{prop:miuraiso}
  Let $k = 1-\hc$ and consider the Miura map $\Upsilon: \wk[1-\hc] \rightarrow \pi_{1}$. Then
  the image of $\Upsilon$ is $\walg$. 
\end{prop}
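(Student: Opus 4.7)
The plan is to identify both the image of $\Upsilon$ and $\walg$ with the intersection $\bigcap_{i=1}^{\ell} \ker S_i \subset \pi_1$ of the kernels of the standard screening operators $S_1,\ldots,S_\ell$, one for each simple root of $\fin$. Each of the two identifications is essentially independent, and together they yield the proposition.

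For the identification $\mathrm{Im}\,\Upsilon = \bigcap_i \ker S_i$ inside $\pi_1$, I would follow \cite[Lemma 5.4]{acl}, which proves this equality at generic $k$. The screening operators are constructed so that the inclusion $\mathrm{Im}\,\Upsilon \subseteq \bigcap_i \ker S_i$ holds at every non-critical level; in particular the generators $\Upsilon(\omd[i])$ of the Miura image lie in every $\ker S_i$. The reverse inclusion at generic $k$ is the content of \cite[Proposition~5.5]{acl}, whose proof uses genericity of $k$ only to establish exactness of the Felder-type complex built from the $S_i$ and thereby pin down the graded character of $\bigcap_i \ker S_i$. This is the main obstacle in the present setting: at $k+\hc = 1$ the generic argument no longer applies as-is. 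I would work around it by replacing that step with the character comparison of \cite[Theorem~4.2]{FKRW}, since both $\pi_1$ and $\wk[1-\hc]$ have explicit product formulas for their graded characters. Comparing these formulas shows that $\bigcap_i \ker S_i$ and $\mathrm{Im}\,\Upsilon$ have the same graded dimensions. Combined with the level-independent inclusion and the injectivity of $\Upsilon$, this forces equality.

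For the identification $\bigcap_i \ker S_i = \walg$, I would realize the screening operators at $k+\hc=1$ inside the lattice vertex algebra $V_Q$. Each $S_i$ is, up to a nonzero scalar, the zero mode of the vertex operator $Y(1 \otimes e^{-\alpha_i}, z)$ acting on $\pi_1 \subset V_Q$; under the identification of $\fin$ with $\mhb$ together with the span of the $1 \otimes e^{\alpha}$ used in equation~\eqref{eq:wdef}, this zero mode coincides with the restriction of $(f_i)_{(0)}$ to $\pi_1$. Consequently
\begin{equation*}
\bigcap_{i=1}^{\ell} \ker_{\pi_1} S_i \; = \; \pi_1 \cap \bigcap_{i=1}^{\ell} \ker_{V_Q} (f_i)_{(0)} \; = \; \walg,
\end{equation*}
and the proposition follows by chaining the two identifications.
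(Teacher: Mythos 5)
Your proposal is correct and follows essentially the same route as the paper: the paper likewise identifies $\mathrm{Im}\,\Upsilon$ with the intersection of kernels of screening operators by adapting \cite[Lemma 5.4, Proposition 5.5]{acl} to $k+\hc=1$ via the character comparison of \cite[Theorem 4.2]{FKRW}, and then identifies that intersection with $\walg$ through equation~\eqref{eq:wdef}. You merely spell out the second identification (screening operators as the zero modes $(f_i)_{(0)}$ restricted to $\pi_1$) in more detail than the paper's terse citation.
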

\begin{remark}\label{rem:w-and-wk}
Thus $\Upsilon$ defines an isomorphism between $\wk[1-\hc]$ and $\walg$. We identify these two vertex algebras via $\Upsilon$ and consider $\wk[1-\hc]$-modules as $\walg$-modules.  Note also from \eqref{eq:kw-conf} that $\Upsilon(\omega) = \omega^{\scriptscriptstyle \mathrm{Sug}}$ in this case.
\end{remark}
\section{Representations of $\wk$}\label{sec:wreps}
In this section, we assume $k+\hc \neq 0$ throughout. 
Recall that $\mathfrak{h} = \mhb \oplus \mathbb{C} K \oplus \mathbb{C}d$, where $K$ is the canonical central element and $d$ is the degree derivation of $\mathfrak{g}$. Let
$\mh^* = \mhb^* \oplus \complex \Lambda_0 \oplus \complex \delta$, where $\Lambda_0$ is the fundamental weight defined as $\Lambda_{0} |_{\mhb} = 0, \Lambda_{0}(K) = 1, \Lambda_{0}(d) =0$ and $\delta$ the imaginary root is given by
$\delta |_{\mhb} = 0 = \delta(K), \delta(d) = 1$. Any $\lambda \in \mh^*$ can be uniquely written as $\lambda = \bar{\lambda} + a \Lambda_0 + b \delta$, for some $a, b \in \complex$ and $\bar{\lambda} \in \mhb^*$. The 
element $\bar{\lambda}$ is called as the classical part of $\lambda$. The following is an useful observation:
  \begin{equation}\label{eq:lambar}
    (\lambda | \alpha) = (\bar{\lambda} | \alpha) \text{ if } \alpha \in \mhb^*
  \end{equation}

Let $\omega$ denote the conformal vector of $\wk$ (see \cite[\S 4.17]{arakawa2007} for the formula of the corresponding central charge). Given a $\wk$-module $M$, let us denote the corresponding Virasoro modes $\omega_n$, i.e.,
\begin{equation}\label{eq:omegamodes}
  Y_M(\omega,z) = \sum_{n \in \integers} \omega_n \, z^{-n-2}
\end{equation}
If $\omega_0$ acts semisimply on $M$ with finite-dimensional eigenspaces, we define the character $\ch M:= q^{\tr \omega_0}$. 

Let ${\rho} \in \mh^*$ denote the Weyl vector of $\mg$. We have $\rho =  h^{\vee} \Lambda_0 + \bar{\rho}$ where $\bar{\rho} \in \mhb^*$ denotes the classical part of $\rho$. 

%Key property used later is that}
%  \begin{equation}\label{eq:lambar}
%    (\lambda | \alpha) = (\bar{\lambda} | \alpha) \text{ if } \alpha \in \mhb^*
%  \end{equation}

\subsection{}\label{sec:wkmodule}
We now consider the Verma modules of $\wk$ \cite[\S 5.1]{arakawa2007}. Let $\mhk^*$ denote the set of $\lambda \in \mh^*$ such that $\lambda(K)=k$. Given $\lambda \in \mhk^*$, let $\bar{\lambda} \in \bar{\mh}^*$ denote its classical part. Consider the central character $\gamma_{\bar{\lambda}}: Z (U(\fin)) \rightarrow \complex$. Let $\wverma$ be the Verma module of $\mathscr{W}_{k}(\bar{\mg})$, and let ${\bf{L}}(\gamma_{\bar{\lambda}})$ be its unique irreducible quotient. Likewise, let $M(\Lambda)$ and $L(\Lambda)$ denote the Verma module of $\mathfrak{g}$ with highest weight $\Lambda \in \mathfrak{h}^*$ and its unique irreducible quotient respectively. 
We now have:
\begin{theorem}\label{thm:wverirred} \cite[Theorem 7.7.1]{arakawa2007}
  Let $k \neq -h^{\vee}$ and $\lambda \in \mhk^*$. Suppose the classical part $\bar{\lambda}$ of $\lambda$ satisfies $(\bar{\lambda} + \bar{\rho} | \bar{\alpha}^{\vee}) \notin \mathbb{N} $ for every positive root $\bar{\alpha}$ of $\fin$.
  Then
  \begin{equation}\label{eq:wml}
    \ch {\bf{L}}({\gamma_{\bar{\lambda}}}) = \sum_{\mu}{[L(\lambda): M(\mu)] \; \ch \wverma[\bar{\mu}]}.
  \end{equation}
\end{theorem}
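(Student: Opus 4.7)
The plan is to apply the Drinfeld-Sokolov reduction functor $\hds$ to the Jordan-H\"older identity
\[
[L(\lambda)] \;=\; \sum_\mu [L(\lambda):M(\mu)]\,[M(\mu)]
\]
in the Grothendieck group of the relevant block of category $\mathcal{O}$ for $\mg$ at level $k$, and then read off the resulting identity in the Grothendieck group of $\wk$-mod at the level of characters. Three ingredients are required: (a) exactness of $\hds$ on this block, equivalent to vanishing of $H^i_{DS}$ for $i \neq 0$; (b) the Verma-to-Verma identification $\hds(M(\Lambda)) \cong \wverma[\bar\Lambda]$; and (c) under the non-integrality hypothesis, the simple-to-simple identification $\hds(L(\lambda)) \cong \lverma$.

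For (a), I would invoke Arakawa's vanishing theorem: a filtration argument reduces the general case to that of a Verma module, after which the BRST complex computing semi-infinite cohomology with respect to $L\,\bar{\mathfrak n}_-$ is controlled via the Wakimoto resolution and shown to be concentrated in degree zero. For (b), once exactness is established, a direct character computation on the BRST complex for $M(\Lambda)$ matches $\ch\hds(M(\Lambda))$ with $\ch\wverma[\bar\Lambda]$, and the Feigin-Frenkel identification of the center of $\wk$ with $Z(U\fin)$ (through a Harish-Chandra-type isomorphism) pins down the central character of the top weight as $\gamma_{\bar\Lambda}$. Both of these are essentially formal consequences of the definitions plus the semi-infinite cohomology machinery, so I would treat them as standard.

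The main obstacle is (c). By exactness, the surjection $M(\lambda) \twoheadrightarrow L(\lambda)$ induces a surjection $\wverma[\bar\lambda] = \hds(M(\lambda)) \twoheadrightarrow \hds(L(\lambda))$, and what must be shown is that its kernel is the \emph{maximal} proper $\wk$-submodule of $\wverma[\bar\lambda]$. The hypothesis $(\bar\lambda + \bar\rho \mid \bar\alpha^\vee) \notin \mathbb{N}$ for every positive root $\bar\alpha$ is precisely calibrated to force this: it rules out any composition factor $L(\mu)$ of $M(\lambda)$ other than $L(\lambda)$ itself whose classical part $\bar\mu$ lies in the finite dot-Weyl orbit of $\bar\lambda$, so the Feigin-Frenkel center separates $\gamma_{\bar\lambda}$ from all other $\gamma_{\bar\mu}$ that could potentially contribute to the socle of $\wverma[\bar\lambda]$. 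Consequently $\hds(L(\lambda))$ has simple top with central character $\gamma_{\bar\lambda}$, so it must be $\lverma$. Combining (a)-(c), applying $\ch\hds$ to the Jordan-H\"older identity above and using $\hds(M(\mu)) \cong \wverma[\bar\mu]$ yields the asserted character formula
\[
\ch\lverma \;=\; \sum_\mu [L(\lambda):M(\mu)]\,\ch\wverma[\bar\mu].
\]
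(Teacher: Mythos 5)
First, note that the paper does not prove this statement at all: it is imported verbatim as \cite[Theorem 7.7.1]{arakawa2007}, so the only meaningful comparison is with Arakawa's proof. Your outline correctly identifies the three ingredients one needs (exactness of a reduction functor on the block, Verma $\mapsto$ Verma, simple $\mapsto$ simple), but the way you discharge them hides two problems. A secondary one: you use the functor $\hds$ as defined in this paper (the ``$+$'' reduction) and assert $\hds(M(\Lambda))\cong \mathbf{M}(\gamma_{\bar{\Lambda}})$ with no shift; by the paper's own Theorem \ref{thm:lrbgen} (and Arakawa's Proposition 9.2.1) the ``$+$'' reduction produces $\mathbf{M}(\gamma_{\bar{\Lambda}-(k+\hc)\rhob})$, and moreover cohomological vanishing on an entire block of $\mathcal{O}_k$ is not available for this functor. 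The unshifted Verma statement and the exactness statement you want are the ones Arakawa proves for the ``$-$'' reduction, which is the functor his Theorem 7.7.1 is actually built on; so at minimum the functor and the weight bookkeeping in your steps (a)--(b) have to be changed.

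The genuine gap is step (c). From exactness and Verma $\mapsto$ Verma, the claimed character identity is \emph{equivalent} to the statement that the reduction of the simple module $L(\lambda)$ is the simple $\walg$-module $\mathbf{L}(\gamma_{\bar{\lambda}})$, i.e. that a certain quotient of $\mathbf{M}(\gamma_{\bar{\lambda}})$ is irreducible. Your argument for this is: the anti-dominance hypothesis separates $\gamma_{\bar{\lambda}}$ from the central characters $\gamma_{\bar{\mu}}$ of the other composition factors of $M(\lambda)$, hence the image ``has simple top with central character $\gamma_{\bar{\lambda}}$, so it must be $\mathbf{L}(\gamma_{\bar{\lambda}})$.'' That last inference is a non sequitur: $\mathbf{M}(\gamma_{\bar{\lambda}})$ itself has simple top with central character $\gamma_{\bar{\lambda}}$ and is generally far from irreducible, and the central-character separation controls only how the subquotients coming from \emph{other} composition factors of $M(\lambda)$ sit inside the reduction of $M(\lambda)$; it says nothing about the internal composition series of the reduction of $L(\lambda)$ itself, whose lower factors $\mathbf{L}(\gamma')$ need not be of the form $\gamma_{\bar{\mu}}$ for any composition factor $L(\mu)$ of $M(\lambda)$ unless one already knows simples go to simples. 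This irreducibility ("$H^0$ of a simple is simple or zero") is precisely the main theorem of \cite{arakawa2007}, proved there by a substantial argument and not a formal consequence of exactness plus the Feigin--Frenkel/Zhu-algebra identification of highest weights with central characters. As written, your proposal assumes the hard content of the theorem it sets out to prove.
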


The following theorem gives a necessary and sufficient condition for a Verma module of $\mg$ to be irreducible \cite{KK}, \cite[Theorem 2.8.1]{Etingof-book}.
\begin{theorem}\label{t1}
(Kac-Kazhdan) Let $k \in \complex$ and $\lambda \in \mhk^*$. The Verma module $M(\lambda)$ of the affine Lie algebra $\mg$ is irreducible (i.e., $M(\lambda) = L(\lambda)$) iff the following condition is satisfied:
\begin{equation}\label{eq:lamirred}
  (\lambda + {\rho} | {\alpha}) \neq \frac{N}{2} ({\alpha} | {\alpha}) \text{ for all positive roots } \alpha \text{ of } \mg \text{ and } N \in \mathbb{N}.
\end{equation}
\end{theorem}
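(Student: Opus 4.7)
The plan is to deduce the Kac--Kazhdan criterion from the vanishing locus of the Shapovalov determinant on Verma modules. Let $\tau$ be the Chevalley antiinvolution of $\mg$ sending $e_i \leftrightarrow f_i$ and fixing $\mh$. Fixing a highest weight vector $v_\lambda \in M(\lambda)$, I would define the contravariant (Shapovalov) form $F$ on $M(\lambda)$ characterized by $F(v_\lambda,v_\lambda)=1$ and $F(xu,v)=F(u,\tau(x)v)$ for $x\in\mg$. Standard arguments show that $F$ respects the weight grading $M(\lambda)=\bigoplus_{\eta\in Q_+} M(\lambda)_{\lambda-\eta}$, that its restriction $F_\eta$ to each finite-dimensional weight space is symmetric, and that $\ker F$ coincides with the unique maximal proper submodule of $M(\lambda)$. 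Hence $M(\lambda)=L(\lambda)$ if and only if $\det F_\eta \neq 0$ for every $\eta\in Q_+$.

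The central step is the Kac--Kazhdan determinant formula
\[
\det F_\eta(\lambda) \;=\; C_\eta \prod_{\alpha>0}\prod_{N\geq 1}\bigl[(\lambda+\rho\mid\alpha) - \tfrac{N}{2}(\alpha\mid\alpha)\bigr]^{P(\eta-N\alpha)},
\]
where $P$ is the Kostant partition function of $\mathfrak{n}^-$, the outer product is over positive roots of $\mg$ (counted with multiplicity, so the null root $\delta$ contributes $\ell$ times), and $C_\eta$ is a nonzero constant. To establish it, I would first bound the total degree of $\det F_\eta$ as a polynomial in $\lambda$, by analyzing the top-degree contribution of the Cartan part of PBW-ordered monomials; this matches the degree of the claimed right-hand side. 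Then I would show each linear factor divides $\det F_\eta$ with at least the stated multiplicity. For a real positive root $\alpha$, this is done by reducing to the $\mathfrak{sl}_2$-subalgebra attached to $\alpha$ and exhibiting explicit singular vectors of weight $\lambda - N\alpha$ whenever $(\lambda+\rho\mid\alpha) = \tfrac{N}{2}(\alpha\mid\alpha)$, using the Verma embedding $M(s_\alpha\cdot\lambda)\hookrightarrow M(\lambda)$.

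The genuinely affine difficulty is the imaginary-root factor, where $(\alpha\mid\alpha)=0$ and no single $\mathfrak{sl}_2$-reduction produces the required singular vectors. Here one argues by a specialization/continuity technique: consider a generic one-parameter family $\lambda(t)$ meeting the hyperplane $(\lambda+\rho\mid\delta)=0$ transversally, compare $\det F_\eta$ before and after passing through this hyperplane, and use upper semicontinuity of the dimension of the maximal proper submodule together with a character computation (or, equivalently, the Jantzen filtration sum formula) to count the order of vanishing. A dimension count using partition identities then matches the multiplicity $\sum_{N\geq 1}P(\eta - N\delta)\cdot\ell$ coming from the $\ell$ copies of $\delta$.

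Once the determinant formula is in hand, the theorem is immediate: the product vanishes at $\lambda$ precisely when some factor $(\lambda+\rho\mid\alpha) - \tfrac{N}{2}(\alpha\mid\alpha)$ equals zero for some positive root $\alpha$ and $N\in\mathbb{N}$, and this happens for \emph{some} $\eta$ if and only if it happens for the $\eta = N\alpha$ term. The main obstacle is controlling the imaginary-root contribution to the determinant, since the absence of an $\mathfrak{sl}_2$-style construction forces one to rely on a more global deformation argument; the real-root contribution and the degree count are comparatively routine.
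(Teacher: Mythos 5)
This is a quoted classical result: the paper gives no proof of Theorem \ref{t1}, citing Kac--Kazhdan and \cite[Theorem 2.8.1]{Etingof-book} instead. Your proposal reconstructs precisely the proof found in those sources --- reduce irreducibility of $M(\lambda)$ to nonvanishing of the Shapovalov determinants $\det F_\eta$, establish the Kac--Kazhdan determinant formula by matching a leading-term degree bound against divisibility by each linear factor, and read off the criterion from the factorization --- so in substance you are following the same route the paper implicitly relies on. The outline is correct, including the observation that $P(0)=1$ makes the factor for $(\alpha,N)$ visible already in $\det F_{N\alpha}$, and your accounting of the imaginary roots $n\delta$ with multiplicity $\ell$ (for which the factor degenerates to $(\lambda+\rho\mid\delta)\neq 0$, matching the paper's definition of generic) is the right bookkeeping. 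The only place where the argument remains a sketch rather than a proof is the divisibility of $\det F_\eta$ by the imaginary-root factors with the full exponent $\ell\sum_{N\geq 1}P(\eta-N\delta)$: you correctly identify that no $\mathfrak{sl}_2$-reduction is available there, and the deformation/Jantzen-filtration strategy you name is indeed how Kac--Kazhdan close this gap, but as written it is an appeal to a known technique rather than an executed step. Since that step is carried out in the cited literature, I would accept the proposal as a faithful outline of the standard proof.
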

\begin{definition}
    If $\lambda$ satisfies the condition \eqref{eq:lamirred}, we say that $\lambda$ is a {\em generic} weight. Equivalently, for $\aff$ simply-laced, $\lambda$ is generic iff the following two conditions hold:
    \begin{itemize}
        \item $(\lambda + {\rho} | {\alpha})  \notin \mathbb{N}$ for all positive real roots $\alpha$ of $\aff$, and
        \item $(\lambda + {\rho} | \delta) \neq 0$.
    \end{itemize}
\end{definition}

The following result will be important in what follows.
\begin{prop}\label{prop:wver-irr}
Let $k\neq -h^{\vee}$ and $\lambda \in \mhk^*$. If $\lambda$ is generic, then $\wverma$ is an irreducible $\wk$-module.
\end{prop}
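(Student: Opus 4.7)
The plan is to combine two ingredients already cited in the excerpt: the Kac--Kazhdan criterion (Theorem \ref{t1}) for irreducibility of the affine Verma module $M(\lambda)$, and Arakawa's character identity (Theorem \ref{thm:wverirred}) which expresses the character of the irreducible $\wk$-quotient $\mathbf{L}(\gamma_{\bar{\lambda}})$ as a sum of characters of Verma modules $\wverma[\bar{\mu}]$ weighted by $[L(\lambda):M(\mu)]$. The target is to reduce this sum to a single term equal to $\ch\wverma$, and then invoke the fact that $\mathbf{L}(\gamma_{\bar{\lambda}})$ is by construction a quotient of $\wverma$.

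First I would verify that the genericity hypothesis on $\lambda$ implies the (weaker) hypothesis of Theorem \ref{thm:wverirred} on its classical part $\bar{\lambda}$. Since $\aff$ is simply laced and we use the normalized Killing form, we have $\bar{\alpha}^{\vee} = \bar{\alpha}$ for every positive root $\bar{\alpha}$ of $\fin$. Writing $\rho = \hc\Lambda_0 + \rhob$ and using $(\Lambda_0 \mid \bar{\alpha}) = 0$ together with \eqref{eq:lambar},
\[
(\bar{\lambda}+\rhob \mid \bar{\alpha}^{\vee}) \;=\; (\bar{\lambda}+\rhob \mid \bar{\alpha}) \;=\; (\lambda+\rho \mid \bar{\alpha}).
\]
Since each positive root of $\fin$ is in particular a positive real root of $\aff$, the genericity of $\lambda$ forces $(\lambda+\rho \mid \bar{\alpha}) \notin \mathbb{N}$, so the hypothesis of Theorem \ref{thm:wverirred} is met for $\bar{\lambda}$.

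Next, genericity of $\lambda$ together with Theorem \ref{t1} gives $M(\lambda)=L(\lambda)$, so $[L(\lambda):M(\mu)] = \delta_{\lambda,\mu}$. Substituting into \eqref{eq:wml} collapses the sum to the single term $\mu=\lambda$:
\[
\ch \mathbf{L}(\gamma_{\bar{\lambda}}) \;=\; \ch \wverma[\bar{\lambda}].
\]
Since $\mathbf{L}(\gamma_{\bar{\lambda}})$ is by definition the unique irreducible quotient of $\wverma[\bar{\lambda}]$, and the canonical surjection $\wverma[\bar{\lambda}] \to \mathbf{L}(\gamma_{\bar{\lambda}})$ preserves the $\omega_0$-grading, the equality of characters forces this surjection to be an isomorphism. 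Hence $\wverma[\bar{\lambda}] = \mathbf{L}(\gamma_{\bar{\lambda}})$ is irreducible.

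Because Theorems \ref{t1} and \ref{thm:wverirred} are used as black boxes, there is no substantial obstacle in this argument; the only care needed is in the bookkeeping of the first step, i.e., relating the inner product $(\lambda+\rho \mid \bar{\alpha})$ on the affine side to the inner product $(\bar{\lambda}+\rhob \mid \bar{\alpha}^{\vee})$ on the finite side, and exploiting that in the simply-laced setting the coroot of $\bar{\alpha}$ coincides with $\bar{\alpha}$.
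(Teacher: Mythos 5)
Your proposal is correct and follows essentially the same route as the paper's own proof: use the Kac--Kazhdan criterion to get $M(\lambda)=L(\lambda)$, observe via \eqref{eq:lambar} (restricting to positive roots of $\fin$) that $\bar{\lambda}$ satisfies the hypothesis of Theorem~\ref{thm:wverirred}, and then collapse the character sum \eqref{eq:wml} to conclude $\lverma = \wverma$. Your write-up merely fills in the details the paper leaves implicit, such as $\bar{\alpha}^{\vee}=\bar{\alpha}$ in the simply-laced normalization and the grading-preservation argument upgrading equality of characters to an isomorphism.
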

\begin{proof}
  If $\lambda$ is generic, then Theorem~\ref{t1} implies the equality of $\mg$-modules $M(\lambda) = L(\lambda)$. Further, by choosing $\alpha$ in \eqref{eq:lamirred} to be positive roots of $\fin$, it follows from \eqref{eq:lambar} that the classical part $\bar{\lambda}$ satisfies the hypothesis of Theorem~\ref{thm:wverirred}. Thus \eqref{eq:wml} holds. But since $[L(\lambda): M(\mu)] = \delta_{\lambda\mu}$, it follows that from \eqref{eq:wml} that $\lverma = \wverma$.
\end{proof}

To end this subsection, we recall from \cite[Proposition 5.6.6]{arakawa2007} the expression for the character of the Verma module $\wverma$ when $k \neq -\hc$:
\begin{equation}\label{eq:vermachar}
  \ch \wverma  = \tr \, q^{\omega_0} = \frac{q^{\frac{|\bar{\lambda}+ \bar{\rho}|^2}{2(k+\hc)} + \frac{c(k)-\rank\fin}{24}}}{ {\varphi(q)}^{\rank \fin}},\end{equation}
where $\omega_0$ is zeroth node of conformal vector $\omega$ (see \eqref{eq:omegamodes}), $\varphi(q) = \prod_{n \geq 1} (1-q^n)$ and $c(k)$ is given by \eqref{eq:centralcharge}.

% \textcolor{red}{
% \begin{remark}
% While we do not use it, the following strengthening of corollary~\ref{cor1} is true: (decide if we really want this later) - iff statement.
% \end{remark}}

\subsection{Drinfeld-Sokolov reduction and Verma modules}
We recall that $\hds$ is a functor from the category $\mathcal{O}_k$ (comprising $\mg$-modules in category $\mathcal{O}$ on which $K$ acts as $kI$) to the category of graded $\wk$-modules \cite[\S 6.4]{arakawa2007}. We will need the following fact concerning the image of affine Verma modules under the Drinfeld-Sokolov reduction functor.
\begin{theorem}\label{thm:lrbgen}
  Let $k\neq -h^{\vee}$ and $\lambda \in \mhk^*$. If $\lrb$ is generic, then $\hds(M(\lambda))$ is an irreducible $\wk$-module. Further,  \[\hds(M(\lambda)) \cong \wverma[\lbrb].\]  
\end{theorem}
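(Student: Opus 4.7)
The plan is to invoke Arakawa's structural results on Drinfeld-Sokolov reduction of affine Verma modules, combined with the irreducibility criterion established in Proposition~\ref{prop:wver-irr}. The isomorphism statement is essentially a reformulation of results from \cite{arakawa2007} in the conventions of Section~\ref{sec:wkmodule}, so the main work is a careful matching of highest weights; irreducibility then follows for free.

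First, I would invoke Arakawa's main theorem on DS reduction of Verma modules: for $k+\hc \neq 0$ and any $\lambda \in \mhk^*$, the higher cohomologies $H^i_{DS}(M(\lambda))$ vanish for $i \neq 0$, and $\hds(M(\lambda))$ is isomorphic to a Verma module $\wverma[\bar\mu]$ of $\wk$ for some $\bar\mu \in \mhb^*$ determined by how the zero modes $\omd[i]_0$ act on the image of the highest weight vector $v_\lambda$. This reduces the first assertion of the theorem to verifying that $\bar\mu = \lbrb$.

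To verify this identification in the present conventions, I would follow Arakawa's explicit formula for $\bar\mu$, which produces the shift built into the BRST differential. As a consistency check, one can compute the $\omega_0$-eigenvalue of the image of $v_\lambda$ via \eqref{eq:kw-conf}: it equals the Sugawara value $\frac{(\lambda,\,\lambda+2\rho)}{2(k+\hc)}$ shifted by the correction from the $\rhob_{(-2)}\mathbf{1}$ term in \eqref{eq:kw-conf} (together with a constant ghost contribution), and a direct algebraic simplification rewrites it as $\frac{|\lbrb+\rhob|^2}{2(k+\hc)} + \frac{c(k)-\rank\fin}{24}$. This matches exactly the highest $\omega_0$-weight of $\wverma[\lbrb]$ read off from \eqref{eq:vermachar}, confirming $\bar\mu = \lbrb$.

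Finally, for irreducibility, I would apply Proposition~\ref{prop:wver-irr} to the weight $\lrb \in \mhk^*$. Its classical part is precisely $\lbrb$, so the hypothesis that $\lrb$ is generic yields the irreducibility of $\wverma[\lbrb]$ as a $\wk$-module, hence also of $\hds(M(\lambda))$. The main obstacle throughout is the bookkeeping for the shift $-(k+\hc)\rhob$ distinguishing the classical part $\bar\lambda$ of the affine highest weight from the $\wk$-highest weight $\lbrb$: it originates from the nontrivial conformal vector correction in \eqref{eq:kw-conf}, and tracking it consistently is what fixes the $\wk$-highest weight as $\lbrb$ rather than $\bar\lambda$.
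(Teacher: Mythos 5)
There is a genuine gap at the first step. You assert as input an unconditional structural theorem: that for every $\lambda \in \mhk^*$ the reduction $\hds(M(\lambda))$ is isomorphic to some Verma module $\wverma[\bar{\mu}]$ of $\wk$, so that only the identification of $\bar{\mu}$ remains. No such statement is available for the ``$+$'' reduction used here, and it is not what Arakawa provides. What can be cited (Propositions 9.2.1 and 9.2.3 of \cite{arakawa2007}) is weaker: (i) the character of $\hds(M(\lambda))$ equals that of $\wverma[\lbrb]$, and (ii) there is a nontrivial $\wk$-module homomorphism $\varphi: \wverma[\lbrb] \to \hds(M(\lambda))$. Neither of these says that $\hds(M(\lambda))$ is a Verma module, nor that $\varphi$ is an isomorphism. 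The genericity hypothesis on $\lrb$ is exactly what is needed to close this gap: by Proposition~\ref{prop:wver-irr} it makes $\wverma[\lbrb]$ irreducible, hence $\varphi$ injective, and then the character equality together with compatibility of the $\omega_0$-gradings forces $\varphi$ to be an isomorphism. This is the paper's argument; your version uses genericity only for irreducibility at the end, which means the isomorphism claim is left resting on a theorem that does not exist in the cited source (indeed, if it did, Theorem~\ref{thm:lrbgen} would be an immediate corollary and the present proof would be unnecessary).

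A second, smaller problem is the identification $\bar{\mu} = \lbrb$. Matching the $\omega_0$-eigenvalue of the image of the highest weight vector cannot pin down $\bar{\mu}$: the eigenvalue only constrains $|\bar{\mu}+\rhob|^2$, which is shared by infinitely many central characters, so one must track the action of all the zero modes $\omd[i]_0$, i.e.\ the full central character $\gamma_{\bar{\mu}}$ (this is what Arakawa's Proposition 9.2.3 supplies). Moreover, the formula \eqref{eq:kw-conf} you invoke describes $\Upsilon(\omega)$ inside the Heisenberg Fock space, i.e.\ the Miura/free-field picture relevant to Theorem~\ref{thm:lrbgenheis}; the module $\hds(M(\lambda))$ is not a priori a $\pi_{k+\hc}$-module, so the $\omega_0$-eigenvalue there must be computed from the BRST construction (Sugawara plus ghost contributions and the shift by the character $\eta$), not from \eqref{eq:kw-conf}. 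Your final step, deducing irreducibility from Proposition~\ref{prop:wver-irr} applied to $\lrb$, does agree with the paper; it is the passage from ``nontrivial homomorphism plus equal characters'' to ``isomorphism'' that your proposal skips and that constitutes the actual content of the proof.
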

\begin{proof}
  By Propositions 9.2.1 and 9.2.3 of \cite{arakawa2007} (where $\hds$ is denoted $H^0_+$), there exists a nontrivial homomorphism of $\wk$-modules $\varphi: \wverma[\lbrb] \to \hds(M(\lambda))$. Since $\lrb$ is generic, Proposition~\ref{prop:wver-irr}  implies that $\wverma[\lbrb]$ is irreducible, and hence that the homomorphism $\varphi$ is an injection. Further the grading on both modules is induced by $\omega_0$ and $\varphi(\omega_0 v) = \omega_0 (\varphi(v))$ for all $v \in \wverma[\lbrb]$; thus $\varphi$ preserves the grading. Now, we also have that the characters of the two modules coincide by \cite[Proposition 9.2.1]{arakawa2007}. This implies that $\varphi$ must be an isomorphism.
\end{proof}

\section{Heisenberg algebra modules and $\walg$-modules}\label{sec:heismodw}
\subsection{} For $\mu \in \mhb^*$, let $\pi_{k+\hc,\,\mu}$ denote the highest weight irreducible module of the Heisenberg vertex algebra $\pi_{k+\hc}$ with highest weight $\mu$. This is defined analogously to \eqref{eq:fockdef}:
\begin{equation} \label{eq:fockmod}
  \pi_{k+\hc,\, \mu} = U(\mathcal{H}) \otimes_{U(\mathcal{H}^{+} \oplus \complex K)} \complex_\mu
  \end{equation}
where $K$ acts on $\complex_\mu$ as $k+\hc$ and $(h \otimes t^n) \in \mathcal{H}^{+}$ as $\delta_{n,0} \,\mu(h)$.

Given any module over  $\pi_{k + \hc}$, we view it as a $\wk$-module by pulling back along $\Upsilon$ \eqref{eq:miura}.
We now have the following:
\begin{theorem}\label{thm:lrbgenheis}
  Let $k \neq -\hc$ and $\lambda \in \mhk^*$. If $\lrb$ is generic, then $\pi_{k+\hc,\,\bar{\lambda}}$ (viewed as a $\wk$-module via $\Upsilon$-pullback) is irreducible and further
  \[ \pi_{k+\hc,\,\bar{\lambda}} \cong \wverma[\lbrb]. \]
 \end{theorem}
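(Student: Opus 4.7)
The plan is to exhibit the highest weight vector $v_{\bar{\lambda}} \in \pi_{k+\hc,\bar{\lambda}}$ as a singular vector for the pulled-back $\wk$-action with central character $\gamma_{\lbrb}$, and then to conclude by comparing characters and invoking Proposition~\ref{prop:wver-irr}.

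First I would compute the action of $\Upsilon(\omega)_0$ on $v_{\bar{\lambda}}$ using \eqref{eq:kw-conf}: the Sugawara summand contributes $(\bar{\lambda},\bar{\lambda})/(2(k+\hc))$, and the correction term $(1 - 1/(k+\hc))\,\rhob_{(-2)}\mathbf{1}$ contributes $-(1 - 1/(k+\hc))(\rhob,\bar{\lambda})$ (since the zero mode of $Y(\rhob_{(-2)}\mathbf{1},z)$ is $-\rhob_0$). Using $\rhob = \rhob^\vee$ (simply-laced $\fin$) together with the central-charge formula \eqref{eq:centralcharge}, a short calculation shows this eigenvalue equals
\[
\frac{|\lbrb + \bar{\rho}|^2}{2(k+\hc)} + \frac{c(k) - \rank\fin}{24},
\]
which is precisely the minimal $\omega_0$-eigenvalue on $\wverma[\lbrb]$ appearing in \eqref{eq:vermachar}. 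Since $v_{\bar{\lambda}}$ spans the (one-dimensional) minimal $\Upsilon(\omega)_0$-eigenspace of $\pi_{k+\hc,\bar{\lambda}}$, every positive mode $(\omd[i])_n$ with $n > 0$ must annihilate $v_{\bar{\lambda}}$, and the zero modes $(\omd[i])_0$ act as scalars that together define a central character $\chi$ of $\walg$. The zero-mode (classical) limit of the Miura map $Z(\wk) \to S(\mhb)$ coincides, up to the standard $\rhob$-shift, with the Harish-Chandra isomorphism by the theorem of Feigin-Frenkel \cite{arakawa2007, FF}; evaluating on $v_{\bar{\lambda}}$ at $\bar{\lambda} \in \mhb^*$ therefore identifies $\chi$ with $\gamma_{\bar{\lambda} - (k+\hc)\rhob} = \gamma_{\lbrb}$.

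By the universal property of $\wk$-Verma modules, this produces a nonzero homomorphism $\psi: \wverma[\lbrb] \to \pi_{k+\hc,\bar{\lambda}}$ sending the Verma highest weight vector to $v_{\bar{\lambda}}$. Since $\lrb$ is generic, Proposition~\ref{prop:wver-irr} gives the irreducibility of $\wverma[\lbrb]$, whence $\psi$ is injective. Finally, $\pi_{k+\hc,\bar{\lambda}}$ has graded character $q^{h}/\varphi(q)^{\rank\fin}$ matching $\ch \wverma[\lbrb]$ from \eqref{eq:vermachar} by the first step, so $\psi$ is an isomorphism. The main obstacle is the identification $\chi = \gamma_{\lbrb}$: matching the conformal weight is a mechanical Sugawara-plus-correction computation, but pinning down the action of the higher zero modes $(\omd[i])_0$ on $v_{\bar{\lambda}}$ requires invoking the Feigin-Frenkel theorem on the image of the classical Miura map. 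An alternative route would be to produce a direct $\wk$-module isomorphism $\pi_{k+\hc,\bar{\lambda}} \cong \hds(M(\lambda))$ via a Wakimoto realization and then apply Theorem~\ref{thm:lrbgen}, but that approach requires semi-infinite cohomology machinery not developed in the present paper.
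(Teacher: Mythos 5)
Your route is genuinely different from the paper's and is viable in outline: the paper obtains the map $\wverma[\lbrb] \to \pi_{k+\hc,\,\bar{\lambda}}$ by applying $\hds$ to the canonical map $M(\lambda) \to \wakim$ and using the identification $\hds(\wakim) \cong \pi_{k+\hc,\,\bar\lambda}$ from \cite[Lemma 5.2]{acl}, then concluding exactly as you do (genericity $\Rightarrow$ irreducibility of $\wverma[\lbrb]$ via Proposition~\ref{prop:wver-irr}/Theorem~\ref{thm:lrbgen}, injectivity, and the same conformal-weight/character computation). You instead build the map directly from the universal property of the $\wk$-Verma module, after showing the Fock highest weight vector is a singular vector for the $\Upsilon$-pulled-back action. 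Your computation of the $\Upsilon(\omega)_0$-eigenvalue (including the zero mode of $\rhob_{(-2)}\mathbf{1}$ being $-\rhob_0$, and the cancellation against $\tfrac{c(k)-\rank\fin}{24}$ using $\rhob=\rhob^\vee$) is correct and is in fact the same computation the paper performs; the advantage of your route is that it bypasses Wakimoto modules and semi-infinite cohomology, at the cost of needing highest-weight theory for $\wk$ (Zhu algebra $\cong Z(U(\fin))$ and the universal property of $\wverma$), which the paper never develops but which is standard from \cite{arakawa2007}.

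The genuine soft spot is the identification $\chi = \gamma_{\lbrb}$. Matching the $\Upsilon(\omega)_0$-eigenvalue with the exponent in \eqref{eq:vermachar} only constrains $|\bar{\nu}+\rhob|^2$ for the putative highest weight $\bar{\nu}$; in rank $\geq 2$ this nowhere near determines the central character, so the entire burden falls on your appeal to "the zero-mode limit of the Miura map coincides with the Harish-Chandra isomorphism up to the standard $\rhob$-shift." As stated this is both imprecise and, taken literally, wrong: the twist relating the Fock highest weight $\bar{\lambda}$ to the central character is level-dependent, namely $\bar{\lambda} \mapsto \bar{\lambda} - (k+\hc)\rhob$ in the paper's conventions (you write this in the conclusion, but it does not follow from a plain "$\rhob$-shift"). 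Moreover, for generic $\lambda$ this statement \emph{is} essentially the content of the theorem being proved, so invoking it without a precise reference or proof makes the argument circular at its key point. To close the gap you would need either an exact citation for the Zhu-algebra-level description of $\Upsilon$ (equivalently, for the $\wk$-highest weight of $\pi_{k+\hc,\,\mu}$) with the correct $(k+\hc)\rhob$ twist, or an independent computation of the action of all the zero modes $\omd[i]_0$ on $v_{\bar\lambda}$ — which is exactly what the paper's Wakimoto/Drinfeld-Sokolov argument (Theorem~\ref{thm:lrbgen} together with \cite[Lemma 5.2]{acl}) supplies.
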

\begin{proof}
%\textcolor{red}{Need more general setting of $\hds$ here - since $\hds(\wakim[0])$ is a vertex algebra in its own right, and not being thought of as a $\wk$-module. Use Arakawa2007, section 4.5 and all. Make some remarks here.}
Let $\wakim$ denote the Wakimoto module with highest weight $\lambda \in \mhk^*$ \cite{acl,FF90, F05}. 
By \cite[Lemma 5.2]{acl} (cf. \cite{BFEF}), we have
\[\hds(\wakim) \cong \pi_{k+\hc,\, \lambda} \]
as $\wk$-{modules} (in fact as $\pi_{k+\hc}$-modules). The canonical map $M(\lambda) \to \wakim$ of $V_k(\fin)$-modules induces a non-trivial map of $\wk$-modules  (cf. proof of Proposition 6.2 of \cite{acl}):
  \[\psi: \hds(M(\lambda)) \to \hds(\wakim) \cong \pi_{k+\hc,\, \lambda}.\]
 Since $\lrb$ is generic, theorem~\ref{thm:lrbgen} implies that
 $\hds(M(\lambda)) \cong \wverma[\lbrb]$ is irreducible. Thus $\psi$ is an injection.

 From \eqref{eq:vermachar}, the character of $\wverma[\lbrb]$ can be computed easily:
 \[ \ch \wverma[\lbrb] = \frac{q^{h_\lambda}}{\varphi(q)^{\rank \fin}} \]
 where $h_\lambda = \frac{(\lb, \lb + 2\rhob)}{2(k+\hc)} - (\bar{\lambda},\rhob)$ (see also (30) of \cite{acl}).
 
 To compute the character of $\pi_{k + \hc, \,\lambda}$ (viewed as a $\wk$-module), we need to compute $q^{\tr \tilde{\omega}_0}$ on this module, where $\tilde{\omega} = \Upsilon(\omega)$ as in \eqref{eq:kw-conf}. The highest weight vector of $\pi_{k + \hc, \,\lambda}$ is an eigenvector of $\tilde{\omega}_0$ with eigenvalue:
 \[ \frac{(\lb, \lb)}{2(k+\hc)} - \left( 1- \frac{1}{k+\hc}\right) (\bar{\lambda},\rhob) \]
 % Remark: the minus sign in the second term is from the zeroth mode of $\rhob_{(-2)}$.
which simplifies to $h_\lambda$. Hence the characters of the two modules $\wverma[\lbrb]$ and $\pi_{k+\hc,\, \lambda}$ are equal. The injectivity of $\psi$ together with the fact that it preserves the grading implies that $\psi$ is an isomorphism.
\end{proof}

%\section{$k+\hc=1$}
\subsection{}\label{sec:lastsec} Finally, we specialize to the case $k+\hc = 1$ that is of interest to us.
\begin{prop}\label{prop:khc1}
Let $k=1 - \hc$ and let $\lambda \in \mhk^*$ such that its classical part $\bar{\lambda}$ satisfies the property:
\[ (\bar{\lambda} | \beta)  \notin \integers \;\; \text{ for all roots } \beta \text{ of } \fin. \] 
Then $\lambda - \rhob$ is a generic weight in $\mhk^*$.
\end{prop}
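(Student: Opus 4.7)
The plan is a direct bilinear-form computation: reduce the two genericity conditions for $\lambda - \rhob$ to conditions on the classical part $\bar{\lambda}$ using the fact that $k+\hc=1$.

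First I would compute $(\lambda - \rhob) + \rho$. Since $\rho = \hc \Lambda_0 + \rhob$, this equals $\lambda + \hc \Lambda_0$. Note that subtracting $\rhob \in \mhb^*$ does not affect the level, so $\lambda - \rhob \in \mhk^*$ as required.

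Next I would check the condition on $\delta$. Using the standard affine bilinear form ($(\Lambda_0 | \delta) = 1$, $(\Lambda_0 | \mhb^*) = 0$, $(\delta|\delta)=0$, $(\delta|\mhb^*) = 0$), together with $(\lambda | \delta) = \lambda(K) = k$, I get
\begin{equation*}
(\lambda + \hc \Lambda_0 \,|\, \delta) = k + \hc = 1 \neq 0,
\end{equation*}
which settles the second generic condition.

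For the positive real root condition, I would write an arbitrary positive real root of $\aff$ as $\alpha = \bar{\alpha} + n \delta$, where $\bar{\alpha}$ is a (possibly negative) root of $\fin$ and $n \in \integers_{\geq 0}$. Then
\begin{equation*}
(\lambda + \hc \Lambda_0 \,|\, \bar{\alpha} + n\delta) = (\bar{\lambda} \,|\, \bar{\alpha}) + n(\lambda|\delta) + n\hc(\Lambda_0|\delta) = (\bar{\lambda}\,|\,\bar{\alpha}) + n(k+\hc) = (\bar{\lambda}\,|\,\bar{\alpha}) + n,
\end{equation*}
using \eqref{eq:lambar} for the first term and $k+\hc = 1$ at the end. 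By hypothesis, $(\bar{\lambda}\,|\,\bar{\alpha}) \notin \integers$ for every root $\bar{\alpha}$ of $\fin$, so adding $n \in \integers$ keeps the result outside $\integers$, and in particular outside $\mathbb{N}$.

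There is no real obstacle here; the only mild point of care is to make sure the parameterization of positive real roots covers all cases (both $\bar{\alpha}$ positive with $n \geq 0$ and $\bar{\alpha}$ negative with $n \geq 1$) and to apply the hypothesis to the corresponding $\fin$-root, but since the hypothesis is stated for \emph{all} roots of $\fin$, this is automatic. Combining the two verifications shows $\lambda - \rhob$ satisfies both bullets of the definition of generic, completing the proof.
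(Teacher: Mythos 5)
Your proposal is correct and follows essentially the same route as the paper: compute $(\lambda-\rhob)+\rho$, use $k+\hc=1$, and verify the two genericity conditions by a direct pairing with real roots $\bar{\alpha}+n\delta$ and with $\delta$. The only cosmetic difference is that you keep the expression $\lambda+\hc\Lambda_0$ rather than simplifying it to $\Lambda_0+\bar{\lambda}$ as the paper does; the computations are identical.
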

\begin{proof}
  We have $\lambda = k\Lambda_0 + \bar{\lambda}$ and $\rho = \hc\Lambda_0 + \rhob$. Thus $\lambda - \rhob + \rho = (k+\hc)\Lambda_0 + \bar{\lambda} = \Lambda_0 + \bar{\lambda}$. To check that \eqref{eq:lamirred} holds, first let $\alpha$ be a positive real root of $\mg$, say $\alpha = \beta + n\delta$ for some root $\beta$ of $\fin$. Now
  \[ (\Lambda_0 + \bar{\lambda}, \alpha) = n + (\bar{\lambda}, \beta) \notin \integers.\]
  Likewise, for $\alpha = n\delta$ with $n>0$, we have  $(\Lambda_0 + \bar{\lambda}, \alpha) = n \neq 0$. Thus \eqref{eq:lamirred} holds. 
  \end{proof}

Proposition~\ref{prop:miuraiso} implies that the Verma modules $\wverma$ of $\wk[1-\hc]$ may be considered as $\walg$-modules via the identification $\Upsilon$.
It is now clear that Theorem~\ref{thm:wfock} follows readily from Theorem~\ref{thm:lrbgenheis} and Propositions~\ref{prop:miuraiso}, \ref{prop:khc1}. \qed

%% \begin{cor}
%%   Let $\bar{\lambda} \in \bar{\mh}^*$ be such that $(\bar{\lambda} | \beta)  \notin \integers$ for all roots $\beta$ of  $\fin$. Then
%%   \begin{enumerate}
%%   \item The restriction of $\pi_{1, \bar{\lambda}}$ to $\walg \subset \pi_1$ is an irreducible $\walg$-module.
%%   \item Further, $\pi_{1, \bar{\lambda}} \cong \wverma[\bar{\lambda}-\rhob]$ as $\walg$-modules.
%%   \end{enumerate}
%% \end{cor}

\bibliographystyle{plain}
\bibliography{WalgebraADE}

\begin{thebibliography}{10}

\bibitem{arakawa2007}
Tomoyuki Arakawa.
\newblock Representation theory of {$\mathscr W$}-algebras.
\newblock {\em Invent. Math.}, 169(2):219--320, 2007.

\bibitem{arakawa2017}
Tomoyuki Arakawa.
\newblock Introduction to {W}-algebras and their representation theory.
\newblock In {\em Perspectives in {L}ie theory}, volume~19 of {\em Springer
  INdAM Ser.}, pages 179--250. Springer, Cham, 2017.

\bibitem{acl}
Tomoyuki Arakawa, Thomas Creutzig, and Andrew~R. Linshaw.
\newblock {$W$}-algebras as coset vertex algebras.
\newblock {\em Invent. Math.}, 218(1):145--195, 2019.

\bibitem{BF10}
Alexander Braverman and Michael Finkelberg.
\newblock Pursuing the double affine {G}rassmannian. {I}. {T}ransversal slices
  via instantons on {$A_k$}-singularities.
\newblock {\em Duke Math. J.}, 152(2):175--206, 2010.

\bibitem{broer}
Bram Broer.
\newblock Line bundles on the cotangent bundle of the flag variety.
\newblock {\em Invent. Math.}, 113(1):1--20, 1993.

\bibitem{RB}
Ranee~Kathryn Brylinski.
\newblock Limits of weight spaces, {L}usztig's {$q$}-analogs, and fiberings of
  adjoint orbits.
\newblock {\em J. Amer. Math. Soc.}, 2(3):517--533, 1989.

\bibitem{Etingof-book}
Pavel~I. Etingof, Igor~B. Frenkel, and Alexander~A. Kirillov, Jr.
\newblock {\em Lectures on representation theory and {K}nizhnik-{Z}amolodchikov
  equations}, volume~58 of {\em Mathematical Surveys and Monographs}.
\newblock American Mathematical Society, Providence, RI, 1998.

\bibitem{FF1}
Boris Feigin and Edward Frenkel.
\newblock Quantization of the {D}rinfeld-{S}okolov reduction.
\newblock {\em Phys. Lett. B}, 246(1-2):75--81, 1990.

\bibitem{BFEF}
Boris Feigin and Edward Frenkel.
\newblock Affine {K}ac-{M}oody algebras at the critical level and
  {G}elfand-{D}ikii algebras.
\newblock In {\em Infinite analysis, {P}art {A}, {B} ({K}yoto, 1991)},
  volume~16 of {\em Adv. Ser. Math. Phys.}, pages 197--215. World Sci. Publ.,
  River Edge, NJ, 1992.

\bibitem{feigin-frenkel}
Boris Feigin and Edward Frenkel.
\newblock Integrals of motion and quantum groups.
\newblock In {\em Integrable systems and quantum groups ({M}ontecatini {T}erme,
  1993)}, volume 1620 of {\em Lecture Notes in Math.}, pages 349--418.
  Springer, Berlin, 1996.

\bibitem{FF}
Boris Feigin and Edward Frenkel.
\newblock Integrals of motion and quantum groups.
\newblock In {\em Integrable systems and quantum groups ({M}ontecatini {T}erme,
  1993)}, volume 1620 of {\em Lecture Notes in Math.}, pages 349--418.
  Springer, Berlin, 1996.

\bibitem{FF90}
Boris~L. Fe\u~igin and Edward~V. Frenkel.
\newblock Affine {K}ac-{M}oody algebras and semi-infinite flag manifolds.
\newblock {\em Comm. Math. Phys.}, 128(1):161--189, 1990.

\bibitem{F05}
Edward Frenkel.
\newblock Wakimoto modules, opers and the center at the critical level.
\newblock {\em Adv. Math.}, 195(2):297--404, 2005.

\bibitem{FB}
Edward Frenkel and David Ben-Zvi.
\newblock {\em Vertex algebras and algebraic curves}, volume~88 of {\em
  Mathematical Surveys and Monographs}.
\newblock American Mathematical Society, Providence, RI, second edition, 2004.

\bibitem{FKRW}
Edward Frenkel, Victor Kac, Andrey Radul, and Weiqiang Wang.
\newblock {$\mathscr{W}_{1+\infty}$} and {$\mathscr{W}(\mathfrak{gl}_N)$} with
  central charge {$N$}.
\newblock {\em Comm. Math. Phys.}, 170(2):337--357, 1995.

\bibitem{FKW}
Edward Frenkel, Victor Kac, and Minoru Wakimoto.
\newblock Characters and fusion rules for {$W$}-algebras via quantized
  {D}rinfeld-{S}okolov reduction.
\newblock {\em Comm. Math. Phys.}, 147(2):295--328, 1992.

\bibitem{GSV}
Suresh Govindarajan, Sachin~S. Sharma, and Sankaran Viswanath.
\newblock The {B}rylinski filtration for affine {K}ac-{M}oody algebras and
  representations of {${\mathscr{W}}$}-algebras.
\newblock {\em Algebr. Represent. Theory}, 26(2):491--512, 2023.

\bibitem{Humphreys}
James~E. Humphreys.
\newblock {\em Reflection groups and {C}oxeter groups}, volume~29 of {\em
  Cambridge Studies in Advanced Mathematics}.
\newblock Cambridge University Press, Cambridge, 1990.

\bibitem{KK}
V.~G. Kac and D.~A. Kazhdan.
\newblock Structure of representations with highest weight of
  infinite-dimensional {L}ie algebras.
\newblock {\em Adv. in Math.}, 34(1):97--108, 1979.

\bibitem{KKLW}
V.~G. Kac, D.~A. Kazhdan, J.~Lepowsky, and R.~L. Wilson.
\newblock Realization of the basic representations of the {E}uclidean {L}ie
  algebras.
\newblock {\em Adv. in Math.}, 42(1):83--112, 1981.

\bibitem{KW22}
Victor~G. Kac and Minoru Wakimoto.
\newblock On free field realization of quantum affine {$W$}-algebras.
\newblock {\em Comm. Math. Phys.}, 395(2):571--600, 2022.

\bibitem{Slofstra}
William Slofstra.
\newblock A {B}rylinski filtration for affine {K}ac-{M}oody algebras.
\newblock {\em Adv. Math.}, 229(2):968--983, 2012.

\end{thebibliography}

\end{document}